

 \documentclass[final,3p,times]{elsarticle}



\usepackage{amssymb, amsmath, amsthm}
\usepackage[]{natbib}






\newtheorem{theorem}{Theorem}
\newtheorem{definition}{Definition}
\newtheorem{lemma}{Lemma}

\journal{Journal of Math Analysis and Applications}

\begin{document}

\begin{frontmatter}



\title{Mixed Estimates for Degenerate Multilinear Operators Associated to Simplexes}
 \author{Robert Kesler}

\address{Department of Mathematics, Cornell University, 212 Garden Avenue, Ithaca, NY 14853 }

\begin{abstract}
We prove that the degenerate trilinear operator $C_3^{-1,1,1}$ given by the formula

\begin{eqnarray*}
C_3^{-1,1,1}(f_1, f_2, f_3)(x)=\int_{x_1 < x_2 < x_3} \hat{f_1}(x_1) \hat{f_2}(x_2) \hat{f_3}(x_3) e^{2\pi i x (-x_1 + x_2 + x_3)} dx_1dx_2 dx_3
\end{eqnarray*}
satisfies the new estimates
\begin{eqnarray*}
 ||C_3^{-1,1,1}(f_1, f_2, f_3)||_{\frac{1}{\frac{1}{p_1}+\frac{1}{p_2}+\frac{1}{p_3}}} \lesssim_{p_1, p_2, p_3} ||\hat{f}_1||_{p^\prime_1}  ||f_2||_{p_2}||f_3||_{p_3}
 \end{eqnarray*}
for all  $f_1 \in L^{p_1}(\mathbb{R}): \hat{f}_1 \in L^{p_1^\prime}(\mathbb{R}) , f_2 \in L^{p_2}(\mathbb{R})$, and $f_3 \in L^{p_3}(\mathbb{R})$ such that $2 <p_1 \leq \infty, 1 < p_2, p_3 < \infty, \frac{1}{p_1}+\frac{1}{p_2} <1$, and $\frac{1}{p_2}+\frac{1}{p_3} <3/2$. Mixed estimates for some generalizations of $C_3^{-1,1,1}$ are also shown. \end{abstract}

\begin{keyword}
Multilinear Integrals, Mixed Estimates, Vector-Valued Inequalities
\MSC 47H60, 46G25
\end{keyword}

\end{frontmatter}



\section{Introduction}
\label{}

Many boundedness results have been obtained for singular multilinear integrals with nonclassical symbols, e.g. \citep{0238019,1491450,1999,2013,2420, 1887641,2127985,1957079,2199086}. A theorem due to Christ and Kiselev states that the bilinear operator $\tilde{C}^{\alpha_1, \alpha_2}_2$ initially defined on $L^1(\mathbb{R})$ functions by

\begin{eqnarray*}\label{1}
\tilde{C}^{\alpha_1, \alpha_2}_2(f_1, f_2)(x)= \int_{x_1<x_2}f_1(x_1) f_2(x_2) e^{2 \pi i x( \alpha_1 x_1 +\alpha_2 x_2)} dx_1 dx_2
\end{eqnarray*}
extends to a continuous map from $L^{p_1}(\mathbb{R}) \times L^{p_2}(\mathbb{R})$ into $L^{\frac{p_1^\prime p_2^\prime}{p_1^\prime + p_2^\prime}}(\mathbb{R})$, assuming $1\leq p_1, p_2 <2$ and $\alpha_1, \alpha_2 \not = 0$, see \citep{1809116,2013}.  Lacey and Thiele proved a wide range of $L^p$ estimates in \cite{1491450} for a related operator called the bilinear Hilbert transform  given by the formula
 
  \begin{eqnarray*}
BHT(f_1, f_2)(x)=  \tilde{C}_2^{1,1} (\hat{f}_1, \hat{f}_2)(x)=\int_{x_1 <x_2} \hat{f}_1(x_1) \hat{f}_2(x_2) e^{2\pi i x (x_1 +x_2)} dx_1 dx_2,
 \end{eqnarray*} 
after which boundedness was shown by Muscalu, Tao, and Thiele in \citep{2127985} for a trilinear variant of the BHT called the Biest, which takes the form
\begin{eqnarray*}
C^{1,1,1}_3(f_1, f_2, f_3)(x)=\int_{x_1<x_2<x_3} \hat{f}_1(x_1)\hat{f}_2(x_2) \hat{f}_3(x_3) e^{2\pi i x(x_1 +x_2+x_3)} dx_1 dx_2 dx_3.
 \end{eqnarray*}
However, multilinear integrals with sign degeneracies such as the operator
 \begin{eqnarray*}
 C_3^{-1,1,1}(f_1, f_2, f_3)(x)=  \int_{x_1 < x_2 <x_3} \hat{f}_1(x_1) \hat{f}_2(x_2) \hat{f}_3(x_3) e^{ 2\pi i x(-x_1+x_2+x_3)} dx_1dx_2 dx_3
 \end{eqnarray*} 
 are known to satisfy no $L^p$  estimates, see \citep{2013}. Despite this fact, we prove in Theorem \ref{OT} that there exists a constant $C_{p_1, p_2, p_3}$ such that for all  $f_1 \in L^{p_1}(\mathbb{R})$ satisfying  $\hat{f}_1 \in L^{p_1^\prime}(\mathbb{R})$ along with $f_2 \in L^{p_2}(\mathbb{R})$ and $f_3 \in L^{p_3}(\mathbb{R})$, 
\begin{eqnarray*}
 ||C_3^{-1,1,1}(f_1, f_2, f_3)||_{\frac{1}{\frac{1}{p_1}+\frac{1}{p_2}+\frac{1}{p_3}}} \leq C_{p_1, p_2, p_3} ||\hat{f}_1||_{p^\prime_1}  ||f_2||_{p_2}||f_3||_{p_3}
 \end{eqnarray*}
as long as $2 <p_1 \leq \infty, 1 < p_2, p_3 <\infty, \frac{1}{p_1}+\frac{1}{p_2} <1$ and $\frac{1}{p_2}+\frac{1}{p_3} <3/2$. We also establish mixed boundedness for $C_5^{1,1,-1,1,1}$ in Theorem \ref{IT} before handling $C_8^{1,1,-1,1,1-1,1,1}$ in Theorem \ref{SMT} and the main conclusion in this paper, namely Theorem \ref{MT}, which establishes mixed boundedness of the generalized n-linear degenerate integral

\begin{eqnarray*}
C_n^{\vec{\epsilon}}(f_1, ..., f_n)(x)= \int_{x_1<... <x_n} \hat{f}_1(x_1) ... \hat{f}_n(x_n) e^{2 \pi i x(\vec{\epsilon}\cdot \vec{x})} d\vec{x}, ~~~\vec{\epsilon} \in \{\pm 1\}
\end{eqnarray*}
for a large range of exponents and answers a previously open question posed by C. Muscalu. The proofs rely on the Christ-Kiselev martingale structure decomposition, see \citep{1809116}, in addition to a generalized version of the Littlewood-Paley inequality of Rubio de Francia for $L^p$ functions with $p<2$, see Rubio de Francia \cite{850681} and Lacey \cite{2293255}, and a maximal $l^2$ vector-valued inequality for the bilinear Hilbert transform and its generalizations, see Lemma \ref{L2}.


 \section{Mixed Estimates}
\subsection{Preliminaries}
To introduce the martingale structure decomposition of Christ and Kiselev, we prove continuity for the map $\tilde{C}_2^{\alpha_1, \alpha_2}: L^{p_1}(\mathbb{R}) \times L^{p_2}(\mathbb{R}) \rightarrow L^{\frac{p_1^\prime p_2^\prime}{p_1^\prime + p_2^\prime}}(\mathbb{R})$ given by
\begin{eqnarray*}
\tilde{C}^{\alpha_1, \alpha_2}_2(f_1, f_2)(x)= \int_{x_1 < x_2} f_1 (x_1) f_2(x_2) e^{2 \pi i x (\alpha _1 x_1 +\alpha _2 x_2)} dx_1 dx_2
\end{eqnarray*}
for $p_1<2$, $p_2=2$ and $\alpha_1, \alpha _2 \not =0$ by following the argument in \citep{2013}. This is shown by first splitting the domain of integration $\{(x_1,x_2): x_1<x_2\}$ into disjoint sets depending on the weighted distance between $x_1$ and $x_2$. Specifically, define a map $\gamma_{f_2}:\mathbb{R} \rightarrow [0, 1]$ given by 

 \begin{eqnarray*}
 \gamma_{f_2}(x) = \frac{\int_{-\infty} ^x | f_2(\bar{x})|^{2} d\bar{x}}{||f_2||_2^2}
  \end{eqnarray*}
and form for every $m \in \mathbb{Z}^+ \cup \{0\}$ and $0 \leq j \leq2^j-2$ the martingale structure $E^{m}_{j}=\gamma_{f_2}^{-1}([j2^{-m}, (j+1)2^{-m}))$ and set $E^m_{2^m-1}=\gamma_{f_2}^{-1}([j2^{-m}, (j+1)2^{-m}])$. Then define $E^{m}_{j,l}=\gamma_{f_2}^{-1}([j2^{-m}, (j+1/2)2^{-m}))$ for $0 \leq j \leq 2^m-1$ and $E^{m}_{j,r}=\gamma_{f_2}^{-1}([(j+1/2)2^{-m}, (j+1)2^{-m}))$ for $0 \leq j \leq 2^m-2$ along with $E^{m}_{2^m-1,r}=\gamma_{f_2}^{-1}([1-2^{-m-1}, 1])$ to construct the partition

\begin{eqnarray}\label{Part}
\mathbb{R}^2 \supset \{ x_1 <x_2: \gamma_{f_2}(x_1)<\gamma_{f_2}(x_2)\} = \bigsqcup_{m \in \mathbb{Z}^+ \cup \{0\}} \bigsqcup_{0 \leq j <2^m} E^m_{j,l} \times E^m_{j,r}.
\end{eqnarray}
This decomposition separates points in $\{x_1 < x_2: \gamma_{f_2}(x_1) \not = \gamma_{f_2} (x_2)\}$ according to the smallest dyadic interval that contains both $\gamma_{f_2}(x_1)$ and $\gamma_{f_2}(x_2)$. Setting $S=\{ x_1 <x_2 : \gamma_{f_2}(x_1) = \gamma_{f_2}(x_2)\}$, it is immediate that $\int_{S} f_1(x_1) f_2(x_2) e^{ 2 \pi i x(x_1 +x_2)} dx_1 dx_2 =0.$ A quick computation then yields 

\begin{eqnarray*}
&& ||\tilde{C}_2(f_1, f_2) ||_{\frac{1}{1/2+1/p^\prime}} \\&=& \left| \left| \sum_{m \geq 0} \sum_{j=0}^{2^m-1} \tilde{C}_2(f_1 \chi_{E^m_{j,r} },f_2 \chi_{E^m_{j,l}} ) \right| \right| _{\frac{1}{1/2+1/p^\prime}}  ~\text{(using (\ref{Part}))} \\ &\leq& \sum_{m \geq 0} \sum_{j=0}^{2^m-1}  \left| \left| \widehat{ f_1 \chi_{E^m_{j, r}} } (\alpha _1 \cdot) \widehat{ f_2 \chi_{E^m_{j, l}}}(\alpha _2 \cdot)  \right| \right|_{\frac{1}{1/2+ 1/p^\prime}} \\ &\lesssim_{\vec{\alpha}}& \sum_{m \geq 0} \sum_{j=0}^{2^m-1} || f_1 \chi_{E^m_{j, r}} ||_{p} || f_2 \chi_{E^m_{j,l}} ||_2 ~(\text{by H\"older and Hausdorff-Young)} \\ &=& \sum_{m \geq 0}2^m  \left( \frac{1}{2^m} \sum_{j=0}^{2^m-1}|| f_1 \chi_{E^m_{j,l}}||^{p/p}_p \right)    | | f_2||_22^{-m/2}\\ &\leq& \sum_{m \geq 0} 2^{m/2} \left( \frac{1}{2^m} \sum_{j=0}^{2^m-1} ||f_1 \chi_{E^m_{j,l}}||_p ^p \right)^{1/p}  ||f_2||_2~\text{(by concavity)} \\ &\lesssim&\sum_{m \geq 0} 2^{m(1/2-1/p)} ||f_1||_2 ||f_2||_p ~\text{(by disjointness of $\{E^m_{j,l}\}_j$)} \\ &\lesssim_p & ||f_1||_p ||f_2||_2~(\text{using} ~p <2).
\end{eqnarray*}


In fact, we could have assumed by a standard limiting argument that $f_2 \in L^1(\mathbb{R})\cap L^2(\mathbb{R})$ and $f_2(x)\not =0~a.e.~x \in \mathbb{R}$, so that $\gamma_{f_2}$ would be strictly increasing. This would in turn force the sets $\{x_1 < x_2: \gamma_{f_2}(x_1) = \gamma_{f_2}(x_2)\}$ and $\gamma_{f_2}^{-1}(1)$ to be empty. Also note that the above proof adapted the martingale structure to the $L^2$ function. It is worth pointing out that one could just as well have adapted the martingale to the $L^p$ function, with a slightly modified proof. This second approach turns out to be the right one to generalize mixed estimates to more complicated operators. Before we illustrate this idea in Theorem \ref{OT}, we first record a few definitions and Theorem \ref{RdF}, based on observations of Rubio de Francia in \cite{850681} and Lacey in \cite{2293255}, along with Theorem \ref{BHT}, which states the boundedness of BHT.

\begin{definition} For $n \geq 1$ and $\vec{\epsilon} \in \{\pm 1\}^n$, 
\begin{eqnarray*}
C_n^{\vec{\epsilon}}(f_1, ..., f_n)(x)= \int_{x_1<... <x_n} \hat{f}_1(x_1) ... \hat{f}_n(x_n) e^{2 \pi i x(\vec{\epsilon}\cdot \vec{x})} d\vec{x}.
\end{eqnarray*}
\end{definition}

\begin{definition}
For $1 \leq p \leq \infty$, the Wiener space $W_p$ is given by

\begin{eqnarray*}
W_p(\mathbb{R})=\{ f \in L^p(\mathbb{R}): \hat{f} \in L^{p^\prime}(\mathbb{R})\}
\end{eqnarray*}
where $\hat{f}$ for generic $f \in L^p(\mathbb{R})$ is defined as a tempered distribution. Moreover, $W_p$ is given the structure of a normed vector space with $||f||_{W_p} = ||\hat{f}||_{L^{p^\prime}}$. 
\end{definition}
As sets,  $W_p \subset L^p$ is properly included for $p >2$, while $W_p=L^p$ for $p \leq 2$. 
\begin{theorem}[ \citep{2293255,850681}]\label{RdF}
Let $\{I_j\}_{j \in \mathbb{Z}}$ be a collection of disjoint rectangles in $\mathbb{R}^n$ for $n \geq 1$. Then the modified square function $\mathfrak{S}_r: L^p(\mathbb{R}^n) \rightarrow L^p(\mathbb{R}^n)$ given by

\begin{eqnarray*}
\mathfrak{S}_r(f)=\left( \sum_{j \in \mathbb{Z}} | f* \check{\chi}_{I_j}|^r \right)^{1/r}
\end{eqnarray*}
is continuous provided one of the following conditions holds: 

\begin{eqnarray*}
&&1)~2 \leq p <\infty~\text{and}~r=2 \\
&&2) ~1<p <2 ~\text{and}~ r >p^\prime. 
\end{eqnarray*}
\end{theorem}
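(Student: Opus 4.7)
The plan is to reduce the multi-dimensional statement to the classical one-dimensional Rubio de Francia square function inequality, and then to bootstrap to $\mathbb{R}^n$ via an iteration over coordinate axes, as in the cited works. The proof would proceed in three stages.

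\emph{Stage 1: Smoothing reduction.} I would first replace each rough cutoff $\chi_{I_j}$ by a smooth bump $\phi_j$ equal to one on $I_j$ and supported in a slight enlargement of $I_j$. Because $I_j$ is a rectangle, the difference $\chi_{I_j} - \phi_j$ splits along coordinate directions, and its contribution to $\mathfrak{S}_r$ is controlled by tensor products of one-dimensional modulated Hilbert transforms applied at the faces of $I_j$. These operators are bounded on $L^p$ for $1<p<\infty$ and preserve the frequency disjointness, so no loss is incurred in passing to the smooth modified square function.

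\emph{Stage 2: One-dimensional case.} For $n=1$ and $p \geq 2$ one has $p' \leq 2$, and the classical Rubio de Francia theorem gives the estimate for $r=2$ via vector-valued Calderón-Zygmund theory applied to the $\ell^2$-valued kernel; the embedding $\ell^2 \hookrightarrow \ell^r$ then covers any $r \geq 2$. For $1<p<2$ one has $p' > 2$ and genuinely needs the modified version with $r > p' > 2$; this is Bourgain's refinement, obtained by complex interpolation between the classical $r=2$ endpoint at $p=2$ and a trivial $\ell^\infty$-type endpoint, working throughout with the smooth projections from Stage 1.

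\emph{Stage 3: Iteration in dimensions.} For $n \geq 2$ the smooth frequency projection onto a rectangle $I_j = I_j^{(1)} \times \cdots \times I_j^{(n)}$ factors as a tensor product of one-dimensional smooth projections. Iterating the 1D inequality along each coordinate axis, with the remaining variables treated as parameters and the 1D theorem applied in its Banach-space-valued form, produces the claimed $L^p(\mathbb{R}^n)$ bound.

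The principal obstacle is that disjoint rectangles in $\mathbb{R}^n$ need not project to disjoint intervals in any single coordinate direction, so one cannot apply Stage 2 directly along each axis. The standard remedy is a dyadic pigeonholing of the rectangle collection into finitely many subfamilies on which coordinatewise disjointness holds up to bounded overlap, together with the full vector-valued strength of the 1D inequality to iterate without losing the range $r > p'$. Controlling the constants through this pigeonholing, together with the complex interpolation in Stage 2, is the most delicate part of the argument.
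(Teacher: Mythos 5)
Your proposal should first be measured against the right target: the paper itself gives no proof of Theorem \ref{RdF} --- it is quoted from the cited references (Rubio de Francia's one-dimensional inequality and its extension to disjoint rectangles, due to Journ\'e and presented in Lacey's monograph). With that said, your Stages 1 and 2 follow the standard one-dimensional route and are essentially sound: sharp cutoffs are recovered from smooth ones via vector-valued bounds for modulated Hilbert transforms; the case $p\ge 2$, $r\ge 2$ is the classical inequality together with the embedding $\ell^2\hookrightarrow\ell^r$; and for $1<p<2$ one interpolates the smooth square function between the $(L^2,\ell^2)$ estimate and an $(L^{p_1},\ell^\infty)$ estimate with $p_1$ close to $1$, the latter coming from the pointwise bound $\sup_j|f*\check{\phi}_j|\lesssim Mf$; this yields exactly the range $r>p^\prime$ (the attribution of this step to Bourgain is inessential).

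The genuine gap is Stage 3. A family of disjoint rectangles cannot, in general, be pigeonholed into $O(1)$ subfamilies whose coordinate projections have bounded overlap: the rectangles $[0,1]\times[k,k+1]$, $1\le k\le N$, have identical first projections, and the nested family $[0,1+k/N]\times[k,k+1]$ has distinct first projections with multiplicity $N$; any bounded-overlap splitting requires at least $cN$ subfamilies, so the constants are not controlled. Moreover, rectangles sharing a common first projection are the harmless case (their second projections are then automatically disjoint); the real obstruction is distinct but overlapping first projections, and that is exactly where an axis-by-axis iteration of the one-dimensional inequality breaks down: after applying the inequality in the second variable for each fixed first-coordinate interval, you are left needing a Rubio de Francia bound in the first variable for an overlapping family of intervals, which is false in general (already at $p=2$, the square function over $[0,k]$, $1 \le k \le N$, has norm growing like $N^{1/2}$). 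Disjointifying the first projections into atoms restores disjointness but replaces each projection $P_{R_1}$ by a coherent sum of unboundedly many atomic projections, and commuting the square function past that sum is equivalent to the problem you started with. This is precisely why the higher-dimensional statement required genuinely multi-parameter input --- Journ\'e's covering lemma and product Calder\'on--Zygmund theory, or the product-space argument in Lacey's monograph --- rather than a tensor-product iteration; as written, your Stage 3 does not prove the case $n\ge 2$.
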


\begin{theorem}[\cite{1491450,1999}]\label{BHT}
Let $1< p_1,p_2 \leq \infty$ satisfy $\frac{1}{p_1}+\frac{1}{p_2} <3/2$. Then the map $BHT: L^{p_1} \times L^{p_2} \rightarrow L^{\frac{1}{\frac{1}{p_1}+\frac{1}{p_2}}}$ is continuous, where 

\begin{eqnarray*}
BHT(f_1, f_2)(x)=\int_{x_1 <x_2} \hat{f}_1(x_1) \hat{f}_2(x_2) e^{ 2\pi i x(x_1+x_2)} dx_1 dx_2.
\end{eqnarray*}
\end{theorem}
\subsection{Statement and Proof of Mixed Estimates}

\begin{theorem}\label{OT}
The trilinear operator $C_3^{-1,1,1}: W_{p_1} \times L^{p_2} \times L^{p_3}\rightarrow L^{\frac{1}{\frac{1}{p_1}+\frac{1}{p_2}+\frac{1}{p_3}}}$ is bounded provided $2 < p_1 \leq \infty$, $1 < p_2, p_3 <\infty$, $\frac{1}{p_1} +\frac{1}{p_2} <1$, and $\frac{1}{p_2}+\frac{1}{p_3} < 3/2$. 
\end{theorem}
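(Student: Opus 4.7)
The plan is to adapt the Christ--Kiselev martingale to $\hat{f}_1\in L^{p_1'}(\mathbb{R})$, executing the ``second approach'' to Menshov--Zygmund noted above. I would set $\gamma_{\hat{f}_1}(x)=\int_{-\infty}^x|\hat{f}_1|^{p_1'}/\|\hat{f}_1\|_{p_1'}^{p_1'}$ and form the intervals $E^m_j,E^m_{j,l},E^m_{j,r}$ exactly as in (\ref{Part}); the definition yields $\int_{E^m_{j,l}}|\hat{f}_1|^{p_1'}=2^{-(m+1)}\|\hat{f}_1\|_{p_1'}^{p_1'}$. Inserting (\ref{Part}) into the pair $(x_1,x_2)$ inside the defining integral of $C_3^{-1,1,1}$ and leaving $x_3$ free subject to $x_3>x_2$ produces
\begin{eqnarray*}
C_3^{-1,1,1}(\vec{f})(x)=\sum_{m\ge 0}\sum_{j=0}^{2^m-1} g_1^{m,j}(-x)\, BHT(g_2^{m,j},f_3)(x),
\end{eqnarray*}
where $g_1^{m,j}$ and $g_2^{m,j}$ denote the functions with Fourier transforms $\hat{f}_1\chi_{E^m_{j,l}}$ and $\hat{f}_2\chi_{E^m_{j,r}}$, respectively.

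Next, I would fix an auxiliary H\"older exponent $r\in(p_1',\min(2,p_2))$; this interval is nonempty precisely because $p_1>2$ and $1/p_1+1/p_2<1$, and its conjugate $r'$ then lies in $(\max(2,p_2'),p_1)$. Applying the triangle inequality in $m$, H\"older in $j$ with the pair $(\ell^r,\ell^{r'})$, and H\"older in $x$ with $1/p=1/p_1+1/s$ and $1/s=1/p_2+1/p_3$ gives
\begin{eqnarray*}
\|C_3^{-1,1,1}(\vec{f})\|_{p}\le\sum_{m\ge 0}\Bigl\|\Bigl(\sum_j|g_1^{m,j}|^r\Bigr)^{1/r}\Bigr\|_{p_1}\Bigl\|\Bigl(\sum_j|BHT(g_2^{m,j},f_3)|^{r'}\Bigr)^{1/r'}\Bigr\|_{s}.
\end{eqnarray*}

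For the first factor, reverse Minkowski (valid since $r\le p_1$), Hausdorff--Young $\|g_1^{m,j}\|_{p_1}\le\|\hat{f}_1\chi_{E^m_{j,l}}\|_{p_1'}$ (valid since $p_1\ge 2$), and the martingale identity combine to produce
\begin{eqnarray*}
\Bigl\|\Bigl(\sum_j|g_1^{m,j}|^r\Bigr)^{1/r}\Bigr\|_{p_1}\lesssim 2^{-m(1/p_1'-1/r)}\|\hat{f}_1\|_{p_1'},
\end{eqnarray*}
the exponent being strictly positive thanks to $r>p_1'$. For the second factor, the plan is to invoke a suitable vector-valued BHT inequality --- the maximal $\ell^2$-valued BHT machinery advertised in the introduction, in its $\ell^{r'}$ incarnation --- to reduce the bound to $\|(\sum_j|g_2^{m,j}|^{r'})^{1/r'}\|_{p_2}\|f_3\|_{p_3}$, and then to apply Theorem \ref{RdF} to the disjoint family of intervals $\{E^m_{j,r}\}_j$ (permissible because $r'>p_2'$) to dominate this factor uniformly in $m$ by $\|f_2\|_{p_2}\|f_3\|_{p_3}$. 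Summing the resulting geometric series in $m$ closes the argument.

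The main technical obstacle, in my view, is supplying the vector-valued BHT bound in the required $\ell^{r'}$ range: the condition $1/p_2+1/p_3<3/2$ is tight already for the scalar Lacey--Thiele theorem, and promoting it to an $\ell^{r'}$ square-function input is a genuinely deeper piece of time-frequency analysis than the soft Hausdorff--Young and Rubio de Francia steps that handle the $f_1$ factor. The two algebraic constraints in the hypotheses play exactly complementary roles: $1/p_1+1/p_2<1$ is what makes the H\"older window $r\in(p_1',\min(2,p_2))$ nonempty, while $1/p_2+1/p_3<3/2$ is the range in which the underlying vector-valued BHT estimate can be expected to hold.
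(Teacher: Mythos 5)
Your setup coincides with the paper's: the martingale adapted to $|\hat f_1|^{p_1'}$, the splitting of only the constraint $x_1<x_2$, and the factorization of each piece as a Fourier projection of $f_1$ times $BHT(f_2*\check{\chi}_{E^m_{j,r}},f_3)$ are exactly the paper's first steps, and your treatment of the $f_1$ factor (Minkowski for $r\le p_1$, Hausdorff--Young, the identity $\|\hat f_1\chi_{E^m_{j,l}}\|_{p_1'}\simeq 2^{-m/p_1'}\|\hat f_1\|_{p_1'}$) is correct. The gap is in the sum over $j$: by using the H\"older pair $(\ell^r,\ell^{r'})$ with $p_1'<r<\min(2,p_2)$ you arrange for all the geometric decay to sit on the $f_1$ factor, which then forces the BHT factor to be bounded uniformly in $m$, and that requires an $\ell^{r'}$-valued bilinear Hilbert transform inequality with $r'\neq 2$. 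This ingredient is not available from the paper's toolkit: the only vector-valued extension proved there is Lemma \ref{L2}/Lemma \ref{L1}, whose proof is Khintchine's inequality, and Khintchine linearizes precisely $\ell^2$ sums and nothing else; the ``maximal $\ell^2$ vector-valued inequality'' mentioned in the introduction is again only $\ell^2$. General $\ell^{r'}$-valued BHT bounds do exist in the literature but are a substantially deeper piece of time-frequency analysis, and, as you concede, you do not supply one, so as written the proof rests on an unproved black box at its central step.

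The paper's own argument shows the black box is unnecessary. It applies Cauchy--Schwarz in $j$ (so both square functions are $\ell^2$), invokes Lemma \ref{L1} with the linear operator $T=BHT(\cdot,f_3):L^{p_2}\to L^{q_1}$, $\frac{1}{q_1}=\frac{1}{p_2}+\frac{1}{p_3}$ (legitimate since $q_1<p_2$, and needing only the scalar Lacey--Thiele bound, i.e. $\frac{1}{p_2}+\frac{1}{p_3}<\frac32$), and only afterwards converts the $f_2$ square function from $\ell^2$ to $\ell^\rho$ with $\rho>p_2'$ at the elementary cost $2^{m(1/2-1/\rho)}$ coming from the $2^m$ terms, so that Theorem \ref{RdF} applies. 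That growth is then absorbed by the decay $2^{m(1/2-1/p_1')}$ of the $f_1$ factor, the sum over scales converging exactly when $\frac{1}{p_1}+\frac{1}{p_2}<1$ (the same hypothesis that, in your scheme, makes the window for $r$ nonempty). Two smaller points: your ``triangle inequality in $m$'' is unavailable when the target exponent $q=(\frac{1}{p_1}+\frac{1}{p_2}+\frac{1}{p_3})^{-1}<1$, which does occur in the admissible range; as in the paper's CASE 2 you must instead use $\|\sum_m F_m\|_q^q\le\sum_m\|F_m\|_q^q$. If you want to salvage your route, you would either have to prove the $\ell^{r'}$-valued BHT estimate or, more simply, retreat to $\ell^2$ and let the $f_1$ decay pay for the $\ell^2\to\ell^\rho$ loss, which is precisely the paper's proof.
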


\begin{proof}
By a standard limiting argument, we assume $\hat{f}_1 \in L^1(\mathbb{R})\cap L^{p_1^\prime}(\mathbb{R})$, $\hat{f}_1(x) \not= 0$ for all $x \in \mathbb{R}$, and Fourier inversion holds. We introduce a martingale structure $E^m_j$ ala Christ and Kiselev given by

\begin{eqnarray*}
\gamma_{f_1}(x)&=&\frac{\int_{-\infty}^x |\hat{f}_1(\bar{x})|^{p_1^\prime} d\bar{x}}{||\hat{f}_1||_{{p_1^\prime}}^{p_1^\prime}}\\
E^m_j&=&\gamma_{f_1}^{-1}([j 2^{-m}, (j+1) 2^{-m}))
\end{eqnarray*}
so that $||\hat{f}_1 \chi_{E^{m}_j}||_{{p_1}^\prime} =2^{-m/p_1^\prime}$~for all $0 \leq j \leq  2^{m}-1$. As before, it is helpful to define
\begin{eqnarray*}
E^m_{j,l}&=&\gamma_{f_1}^{-1}([j 2^{-m}, (j+1/2)2^{-m}))\\
E^m_{j,r}&=&\gamma_{f_1}^{-1}([(j+1/2)2^{-m}, (j+1)2^{-m}))
\end{eqnarray*}
and construct the partition, using the fact that $\gamma_{f_1}$ is strictly increasing,

\begin{eqnarray*}
\mathbb{R}^2 \supset \{(x_1, x_2): x_1 < x_2\} = \bigsqcup_{m \in \mathbb{Z}^+ \cup \{0\}} \bigsqcup_{0 \leq j < 2^{m}} E^m_{j,l} \times E^m_{j,r}.
\end{eqnarray*} 
 We next split the proof into two cases, depending on whether the target exponent lies above or below 1. Our quasi-Banach analysis is not much different from the Banach version. 

CASE 1: q= $\frac{1}{\frac{1}{p_1}+\frac{1}{p_2}+\frac{1}{p_3}} \geq 1.$ Splitting the set $\{x_1 <x_2\}$ gives

\begin{eqnarray*}
&& ||C_3^{-1,1,1}(f_1, f_2, f_3)||_q \\ &=& \left |\left| \sum_{m \geq 0} \sum_{j=0}^{2^m-1}( f_1* \check{\chi}_{E^m_{j,l}} ) \cdot BHT(f_2*\check{\chi}_{E^m_{j,r}}, f_3) \right|\right|_q \\&\leq& \sum_{m \geq 0} \left| \left| \sum_{j=0}^{2^m-1}( f_1* \check{\chi}_{E^m_{j,l}} ) \cdot BHT(f_2*\check{\chi}_{E^m_{j,r}}, f_3) \right|\right|_q \\ &\leq& \sum_{m \geq 0} \left| \left| \left( \sum_{j=0}^{2^m-1}| f_1* \check{\chi}_{E^m_{j,l}}|^2 \right)^{1/2}\left( \sum_{j=0}^{2^m-1} | BHT(f_2*\check{\chi}_{E^m_{j,r}}, f_3)|^2 \right)^{1/2} \right|\right|_q.
\end{eqnarray*}
As before, the idea is produce a convergent geometric sum over the scale m. To obtain this, we first observe for $\frac{1}{q_1}=\frac{1}{p_2}+\frac{1}{p_3}$ that

\begin{eqnarray*}
&& \left| \left| \left( \sum_{j=0}^{2^m-1}| f_1* \check{\chi}_{E^m_{j,l}}|^2 \right)^{1/2}\left( \sum_{j=0}^{2^m-1} | BHT(f_2*\check{\chi}_{E^m_{j,r}}, f_3)|^2 \right)^{1/2} \right|\right|_q\\ &\leq& \left| \left| \left( \sum_{j=0}^{2^m-1}| f_1* \check{\chi}_{E^m_{j,l}}|^2 \right)^{1/2} \right| \right|_{p_1} \left| \left| \left( \sum_{j=0}^{2^m-1} | BHT(f_2*\check{\chi}_{E^m_{j,r}}, f_3)|^2 \right)^{1/2} \right|\right|_{q_1} \\ &\lesssim& \left| \left| \left( \sum_{j=0}^{2^m-1}| f_1* \check{\chi}_{E^m_{j,l}}|^2 \right)^{1/2} \right| \right|_{p_1} \left| \left| \left( \sum_{j=0}^{2^m-1} |f_2*\check{\chi}_{E^m_{j,r}}|^2 \right)^{1/2} \right| \right|_{p_2} ||f_3||_{p_3},
\end{eqnarray*}
where the last line follows from an application of Lemma \ref{L1} in the appendix and the known boundedness of the bilinear Hilbert transform. The advantage in writing the sum as a product in this way is that one may use H\"{o}lder's inequality even in the quasi-Banach case. 
Next, use convexity of $x \mapsto |x|^{{p_1}/2}$ and the Hausdorff-Young inequality to see

\begin{eqnarray*}
\left| \left| \left( \sum_{j=0}^{2^m-1}| f_1* \check{\chi}_{E^m_{j,l}}|^2 \right)^{1/2} \right| \right|_{p_1} & \leq& 2^{ m(1/2-1/p_1)} \left(\sum_{j=0}^{2^m-1}||f_1* \check{\chi}_{E^m_{j,l}}||_{p_1}^{p_1} \right)^{1/p_1} \\&\leq& 2^{ m(1/2-1/p_1)} \left(\sum_{j=0}^{2^m-1}||\hat{f}_1 \chi_{E^m_{j,l}}||_{p^\prime_1}^{p_1} \right)^{1/p_1} \\ &=& 2^{m(1/2-1/p_1)} 2^{m(1/p_1-1/p_1^\prime)} \\ &=& 2^{m(1/2-1/p_1^\prime)}.
\end{eqnarray*}
The remaining factor is $\left| \left| \left( \sum_{j=0}^{2^m-1} |f_2*\check{\chi}_{E^m_{j,r}}|^2 \right)^{1/2} \right| \right|_{p_2}$. If $p_2 \geq 2$, we may pass this problem to the original Rubio de Francia inequality in Theorem \ref{RdF} and conclude the theorem for CASE 1.   So, we may assume without loss of generality that $p_2<2$. For this, we need to invoke the generalized Rubio de Francia estimate by first raising the $l^2$ norm to the $l^r$ norm at an acceptable cost. Specifically, for any $r > p_2^\prime$, we compute

\begin{eqnarray*}
&& \left| \left| \left( \sum_{j=0}^{2^m-1} |f_2*\check{\chi}_{E^m_{j,r}}|^2 \right)^{1/2} \right| \right|_{p_2} \\&\leq& 2^{m(1/2-1/r)}\left| \left| \left( \sum_{j=0}^{2^m-1} |f_2*\check{\chi}_{E^m_{j,r}}|^r \right)^{1/r} \right| \right|_{p_2} \\ &\lesssim& 2^{m(1/2-1/r)} ||f_2||_{p_2}. 
\end{eqnarray*}
One checks that this loss does not affect the convergence of the sum over m because $1/2-1/p_1^\prime +1/2-1/r<0$ provided one chooses r close enough to $p_2^\prime$. 

CASE 2: $q<1$. Because one still has recourse to H\"{o}lder's inequality, the only difference with CASE 1 is how one moves the sum over m outside the $L^q$ norm in the absence of the triangle inequality by observing the following:

\begin{eqnarray*}
&& \left |\left| \sum_{m \geq 0} \sum_{j=0}^{2^m-1}( f_1* \check{\chi}_{E^m_{j,l}} ) \cdot BHT(f_2*\check{\chi}_{E^m_{j,r}}, f_3) \right|\right|_q \\ &\leq& \left( \sum_{m \geq 0} \left| \left| \sum_{j=0}^{2^m-1}( f_1* \check{\chi}_{E^m_{j,l}} ) \cdot BHT(f_2*\check{\chi}_{E^m_{j,r}}, f_3) \right|\right|_q^q \right)^{1/q}.
\end{eqnarray*}
\end{proof}

To prove our next result, Theorem \ref{IT},  we need the following fact from \cite{2221256}:
\begin{theorem}[Bi-Carleson Estimates]
The operator $\sup BHT (f_1, f_2)(x)$ given by 

\begin{eqnarray*}
\sup_{N} \left| \int_{x _1<x_2< N} \hat{f}_1(x_1) \hat{f}_2(x_2) e^{2 \pi i x(x_1+x_2)} dx_1 dx_2\right|
\end{eqnarray*}
 is bounded from $L^{p_1} \times L^{p_2}$ into $L^{\frac{p_1 p_2}{p_1+p_2}}$ if $1< p_1, p_2  \leq \infty$ and $\frac{1}{p_1}+\frac{1}{p_2} <3/2$.
\end{theorem}

\begin{theorem}\label{IT}
The operator $C_5^{1,1,-1,1,1}:L^{p_1} \times L^{p_2} \times W_{p_3} \times L^{p_4} \times L^{p_5}\rightarrow L^{\frac{1}{\sum_{i=1}^5 \frac{1}{p_i}}}$ is continuous provided $2 < p_3 \leq \infty$,  $1 < p_1, p_2, p_4, p_5 <\infty$, $\frac{1}{p_1}+\frac{1}{p_2}, \frac{1}{p_4}+\frac{1}{p_5} < 3/2$, and $\frac{1}{p_2}+\frac{1}{p_3}, \frac{1}{p_3}+\frac{1}{p_4} <1$.

\end{theorem}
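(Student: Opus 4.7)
The plan is to mirror the proof of Theorem \ref{OT}, applying the Christ-Kiselev martingale decomposition adapted to $\hat{f}_3 \in L^{p_3^\prime}$ simultaneously to both sign-degenerate constraints $x_2 < x_3$ and $x_3 < x_4$. With $\gamma_{f_3}(x) = \int_{-\infty}^x |\hat{f}_3|^{p_3^\prime}/||\hat{f}_3||_{p_3^\prime}^{p_3^\prime}$ and the associated intervals $E^m_j, E^m_{j,l}, E^m_{j,r}$, I partition each constraint, producing scales $(m_1, j_1)$ and $(m_2, j_2)$ respectively. Since the left and right halves of the same dyadic interval are disjoint, every triple $(x_2, x_3, x_4)$ with $x_2 < x_3 < x_4$ must satisfy $m_1 \ne m_2$, and I will split the sum into the two cases $m_1 < m_2$ and $m_1 > m_2$.

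For the case $m_1 < m_2$, the key geometric observation is that for each fixed $(m_2, j_2)$, the indices $j_1 = j_1(m_1, m_2, j_2)$ forced by the inclusion $E^{m_2}_{j_2,l} \subset E^{m_1}_{j_1,r}$ produce pairwise disjoint left halves $\{E^{m_1}_{j_1(m_1),l}\}_{m_1 < m_2}$ whose union equals $(-\infty, y^*_{m_2, j_2})$, where $y^*_{m_2, j_2} = \inf E^{m_2}_{j_2}$. By linearity of $BHT$ in its second argument, the inner $m_1$-sum telescopes into the truncated operator $BHT_{<y^*_{m_2, j_2}}(f_1, f_2)$, so this portion of $C_5^{1,1,-1,1,1}(\vec{f})$ reduces to
\begin{eqnarray*}
\sum_{m_2, j_2} BHT_{<y^*_{m_2, j_2}}(f_1, f_2) \cdot (f_3 * \check{\chi}_{E^{m_2}_{j_2,l}}) \cdot BHT(f_4 * \check{\chi}_{E^{m_2}_{j_2,r}}, f_5).
\end{eqnarray*}

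The inner sum over $j_2$ is then bounded via the pointwise inequality $|\sum_j A_j B_j C_j| \leq (\sup_j |A_j|)(\sum_j |B_j|^2)^{1/2}(\sum_j |C_j|^2)^{1/2}$ followed by a three-way H\"older into exponents $(q_1, p_3, q_2)$ with $1/q_1 = 1/p_1 + 1/p_2$ and $1/q_2 = 1/p_4 + 1/p_5$: the bicarleson lemma controls $||\sup_y BHT_{<y}(f_1, f_2)||_{q_1} \lesssim ||f_1||_{p_1} ||f_2||_{p_2}$ using $1/p_1 + 1/p_2 < 3/2$; the $f_3$-square function is bounded by $\lesssim 2^{m_2(1/2 - 1/p_3^\prime)} ||\hat{f}_3||_{p_3^\prime}$ via the convexity-plus-Hausdorff-Young computation from the proof of Theorem \ref{OT}; and the $f_4$-square function is handled by Lemma \ref{L1} together with (generalized) Rubio de Francia. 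The resulting geometric series in $m_2$ converges under the hypothesis $1/p_3 + 1/p_4 < 1$ via the same exponent cancellation as in Theorem \ref{OT}.

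The symmetric case $m_1 > m_2$ collapses instead to $BHT_{>z^*_{m_1, j_1}}(f_4, f_5)$ with $z^*_{m_1, j_1} = \sup E^{m_1}_{j_1}$, invokes the bicarleson bound under $1/p_4 + 1/p_5 < 3/2$, and yields geometric convergence via the hypothesis $1/p_2 + 1/p_3 < 1$. The quasi-Banach range $q < 1$ is handled exactly as in Theorem \ref{OT}, using the pointwise inequality $|\sum_m a_m|^q \leq \sum_m |a_m|^q$ to pull the scale-sum outside the $L^q$ norm. The main obstacle is setting up the two-parameter decomposition and recognizing the telescoping identity which---thanks to the disjointness of left halves across nested dyadic scales---collapses the inner scale-sum into a truncated BHT, precisely the object whose uniform bound is furnished by the bicarleson lemma proved immediately before the theorem.
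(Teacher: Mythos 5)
Your proof is correct, but it takes a genuinely different route from the paper's. The paper explicitly rejects carving the two junctions $x_2<x_3$ and $x_3<x_4$ with two copies of the single-layer structure, since that only produces the decay $2^{\max\{m_1,m_2\}(1/2-1/p_3^\prime)}$ and a strictly smaller range; instead it builds a \emph{restricted} (nested) martingale $E^{m_1,m_2}_{j_1,j_2}$ by conditioning $|\hat{f}_3|^{p_3^\prime}$ on each $E^{m_1}_{j_1}$, gaining the product decay $2^{(m_1+m_2)(1/2-1/p_3^\prime)}$ at the cost that the families $\{E^{m_1,m_2}_{j_1,j_2,r}\}$ overlap at the endpoint $j_2=2^{m_2}-1$, which forces the CASE 1a/1b split with the Bicarleson bound on one factor and Lemma \ref{L1} plus the generalized Rubio de Francia estimate (Theorem \ref{RdF}) on another. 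Your decomposition sidesteps that entirely: carving each junction once with the structure adapted to $\hat{f}_3$, noting $m_1\ne m_2$, and telescoping the coarser-scale sum into a frequency-truncated $BHT$ reduces the operator to two \emph{single}-scale sums in which the decay $2^{m(1/2-1/p_3^\prime)}$ comes from one layer, the interval families fed to Rubio de Francia are honestly disjoint (no endpoint case), and the truncated factor is absorbed by the Bicarleson maximal bound; the exponent bookkeeping then reproduces exactly the hypotheses $\frac{1}{p_1}+\frac{1}{p_2},\frac{1}{p_4}+\frac{1}{p_5}<3/2$ and $\frac{1}{p_2}+\frac{1}{p_3},\frac{1}{p_3}+\frac{1}{p_4}<1$. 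Two points you should spell out: (i) for fixed $(m_2,j_2)$ only those $m_1<m_2$ for which the scale-$m_1$ dyadic ancestor of $[j_22^{-m_2},(j_2+1)2^{-m_2})$ contains it in its right half contribute a forced $j_1$; the union of the corresponding left halves is still all of $\gamma_{f_3}^{-1}([0,j_22^{-m_2}))$, so the telescoping identity holds, but as written it suggests every $m_1<m_2$ occurs; (ii) in the case $m_1>m_2$ the collapsed object is truncated from \emph{below} in the lower variable of $BHT(f_4,f_5)$, which is reduced to the stated Bicarleson lemma only after a reflection/conjugation interchanging the roles of $f_4$ and $f_5$ (the paper is equally brief on the analogous point, so this is exposition, not a gap). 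The trade-off is that the paper's nested-martingale formalism is the one iterated in Theorem \ref{PR} and the main theorem \ref{MT} for longer strings $1,1,-1,1,1,-1,\dots$, whereas your telescoping, cleaner for $C_5^{1,1,-1,1,1}$, would need extra bookkeeping to propagate through several Wiener functions.
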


\begin{proof}
One may try on a first attempt to introduce two copies of the same martingale structure, namely $E^{m_1}_{j_1}$ and $E^{m_2}_{j_2}$ adapted to $f_3$ this time, and split $C_5^{1,1,-1,1,1}$ as follows:

\begin{eqnarray*}
\sum_{m_1, m_2 \geq 0}\sum_{j_1, j_2} BHT(f_1, f_2 * \check{\chi}_{E^{m_1}_{j_1,l}}) (f_3* \check{\chi}_{E^{m_1}_{j_1, r}} * \check{\chi}_{E^{m_2}_{j_2,l}}) BHT(f_4*\check{\chi}_{E^{m_2}_{j_2,r}}, f_5).
\end{eqnarray*}
A computation similar to Theorem \ref{OT} yields in the Banach case, setting $\frac{1}{q_1}=\frac{1}{p_1}+\frac{1}{p_2}$ and $\frac{1}{q_2}=\frac{1}{p_4}+\frac{1}{p_5}$, 

\begin{eqnarray*}
&&||C_5^{1,1,-1,1,1} (\vec{f})||_q\\ &\lesssim&  \sum_{m_1, m_2 \geq 0} \left|\left| \left(\sum_{j_1} |BHT(f_1, f_2 * \check{\chi}_{E^{m_1}_{j_1, l}})|^2 \right)^{1/2} \right|\right|_{q_1} \times  \left|\left| \left( \sum_{j_1, j_2} |f_3* \check{\chi}_{E^{m_1}_{j_1, r}} * \check{\chi}_{E^{m_2}_{j_2, l}}|^2 \right)^{1/2} \right |\right|_{p_3}  \\ && ~~~~~~~\times\left|\left| \left( \sum_{j_2} |BHT(f_4*\check{\chi}_{E^{m_2}_{j_2,r}},f_5)|^2 \right)^{1/2} \right|\right|_{q_2} \\ &:=& \sum_{m_1, m_2 \geq 0} A_{m_1} \times B_{m_1, m_2} \times C_{m_2}.
\end{eqnarray*}
As before, the goal is to produce a convergent geometric series over the scales $m_1$ and $m_2$. The factors $A_{m_1}$ and $C_{m_2}$ are both handled by the $l^2$ vector-valued inequality for the $BHT$ using Theorem \ref{BHT} and Lemma \ref{L1} followed by the generalized Rubio de Francia estimate given in Theorem \ref{RdF}. If both $p_2, p_4 <2$, this part can be bounded above by 

\begin{eqnarray*}
2^{m_1(1/2-1/p_2^\prime)} 2^{m_2(1/2-1/p_4^\prime)} ||f_1||_{p_1} ||f_2||_{p_2} ||f_4||_{p_4} ||f_5||_{p_5}.
\end{eqnarray*}
 The decay that enables the geometric series to converge comes from the middle factor, namely $B_{m_1, m_2}$. Using the convexity of $x \mapsto |x|^{p_3/2}$, the Hausdorff-Young inequality, and the nesting of dyadic intervals, we can bound $B_{m_1, m_2}$ by

\begin{eqnarray*}
2^{\max\{m_1, m_2\} (1/2-1/p_3)}  \left(\sum_{j_1, j_2} ||\hat{f}_3\chi_{E^{m_1}_{j_1, r}}\chi_{E^{m_2}_{j_2, l}}||_{p^\prime_3}^{p_3}\right)^{1/p_3} = 2^{\max\{m_1, m_2\}(1/2-1/p_3^\prime)}.
\end{eqnarray*}
To finish in this case, we must require that the sum

\begin{eqnarray*}
\sum_{m_1, m_2 \geq 0}2^{m_1(1/2-1/p_2^\prime)} 2^{m_2(1/2-1/p_4^\prime)} 2^{\max\{m_1, m_2\}(1/2-1/p_3^\prime)}
\end{eqnarray*}
converges, which happens if and only if $\frac{1}{p_2}+\frac{1}{p_4}+\frac{1}{p_3} <3/2$. 
This condition implies $\frac{1}{p_2}+\frac{1}{p_3}, \frac{1}{p_3}+\frac{1}{p_4} <1$ by the assumption $p_2, p_4 <2$, so we have proven only a subset of the exponent range claimed in the theorem.  What cost us was the fact that 

\begin{eqnarray*}
B_{m_1, m_2}=2^{\max\{m_1, m_2\}(1/2-1/p_3^\prime)}
\end{eqnarray*}
did not decay fast enough for the sum over scales to converge. 

The key idea to get the full range is to adopt a different martingale structure decomposition that yields $B_{m_1, m_2}=2^{(m_1+m_2)(1/2-1/p_3^\prime)}$, which \emph{will} be enough to conclude the result. By another standard limiting argument, we assume $\hat{f}_3 \in L^1(\mathbb{R}) \cap L^{p_3^\prime}(\mathbb{R})$ and $\hat{f}_3 (x) \not = 0~a.e.~x \in \mathbb{R}$. First construct $E^{m_1}_{j_2}$ given by 

\begin{eqnarray*}
\gamma_{f_3}(x)&=&\frac{\int_{-\infty}^x |\hat{f}_3(\bar{x})|^{p_3^\prime} d\bar{x}}{||\hat{f}_3||_{p_3^\prime}^{p_3^\prime}}\\
E^{m_1}_{j_1}&=&\gamma_{f_3}^{-1}([j_12^{-m_1}, (j_1+1)2^{-m_1})).
\end{eqnarray*}
Next, define the \emph{restricted} martingale structure $E^{m_1, m_2}_{j_1, j_2}$ by setting

\begin{eqnarray*}
\gamma_{m_1, j_1, f_3}(x)&=&\frac{ \int_{-\infty}^x |\hat{f}_3(\bar{x})|^{p_3^\prime} \chi_{E^{m_1}_{j_1}} (\bar{x})d\bar{x}}{||\hat{f}_3 \chi_{E^{m_1}_{j_1}}||_{p_3^\prime}^{p_3^\prime}}\\
E^{m_1, m_2}_{j_1, j_2}&=&\gamma_{m_1, j_1, f_3}^{-1}([j_22^{-m_2}, (j_2+1)2^{-m_2}))~\forall~1 \leq j_2 \leq 2^{m_2}-2
\end{eqnarray*}
with the appropriate modification for $E^{m_1, m_2}_{j_1, 2^{m_2}-1}$. Thus, $||\hat{f}_3 \chi_{E^{m_1}_{j_1}} \chi_{E^{m_1, m_2}_{j_1, j_2}}||_{p_3^\prime} = 2^{-(m_1+m_2)/p_3^\prime}~\forall~0 \leq j_1 <2^{m_1},0 \leq j_2 < 2^{m_2}$. We now partition the domain 

\begin{eqnarray*}
\mathbb{R}^3 \supset \{ (x_2, x_3, x_4): x_2 <x_3 <x_4\}= \bigsqcup_{m_1, m_2} \bigsqcup_{j_1, j_2} E^{m_1}_{j_1, l} \times (E^{m_1}_{j_1, r} \cap E^{m_1, m_2}_{j_1, j_2, l} )\times E^{m_1, m_2}_{j_1, j_2, r}.
\end{eqnarray*}
CASE 1: $q=\frac{1}{\sum_{i=1}^{5} \frac{1}{p_i}} \leq 1$. Then 
\begin{eqnarray*}
\left| \left| C_{5}^{1,1,-1,1,1} (\vec{f}) \right| \right|_q \leq \left( \sum_{m_1, m_2 \geq 0} \left| \left| \sum_{j_1,j_2} BHT(f_1, ,f_2*\check{\chi}_{E^{m_1}_{j_1,l}}) \cdot f_{3}*\check{\chi}_{E^{m_1}_{j,r}} * \check{\chi}_{E^{m_1,m_2}_{j_1, j_2,l}} \cdot BHT(f_{4}*\check{\chi}_{E^{m_1,m_2}_{j_1,j_2,r}}, f_5) \right| \right|_q^q  \right)^{1/q}.
\end{eqnarray*}
We now want to split the sum over $j_1, j_2$ as
\begin{eqnarray*}
\sum_{j_1, j_2} =\sum_{j_1=0}^{2^{m_1}-1} \sum_{j_2 \not = 2^{m_2}-1} +\left.  \sum_{j_1=0}^{2^{m_1}-1} \right|_{j_2=2^{m_2}-1}
\end{eqnarray*}
to reflect the fact that $\{E^{m_1, m_2}_{j_1, j_2, r}\}_{j_1, j_2}$ is not a disjoint collection of intervals, while $\{E^{m_1, m_2}_{j_1, j_2,r}\}_{j_1, j_2: j_2 \not = 2^{m_2}-1}$ is. 

CASE 1a: We deal with the first term, which corresponds to $\sum_{j_1=0}^{2^{m_1}-1} \sum_{j_2 \not = 2^{m_2}-1}$. The computation is
\begin{eqnarray*}
&&\left( \sum_{m_1, m_2 \geq 0}  \left| \left|\sum_{j_1,j_2 \not = 2^{m_2}-1} BHT(f_1, ,f_2*\check{\chi}_{E^{m_1}_{j_1,l}}) \times  f_{3}*\check{\chi}_{E^{m_1}_{j,r}} * \check{\chi}_{E^{m_1,m_2}_{j_1, j_2,l}} \times BHT(f_{4}*\check{\chi}_{E^{m_1,m_2}_{j_1,j_2,r}}, f_5) \right| \right|_q^q  \right)^{1/q} \\ &\leq& \left( \sum_{m_1, m_2 \geq 0} \left| \left|  \sup_{j_1}| BHT(f_1, ,f_2*\check{\chi}_{E^{m_1}_{j_1,l}})| \times \left(\sum_{j_1,j_2 \not = 2^{m_2}-1} |f_{3}*\check{\chi}_{E^{m_1}_{j_1, r}} * \check{\chi}_{E^{m_1,m_2}_{j_1, j_2,l}}|^2 \right)^{1/2}  \right. \right. \right. \\ && \left. \left. \left. ~~~~~~~~~~ \times \left( \sum_{j_1, j_2 \not = 2^{m_2}-1} |BHT(f_{4}*\check{\chi}_{E^{m_1,m_2}_{j_1,j_2,r}}, f_5)|^2 \right)^{1/2} \right| \right|_q^q  \right)^{1/q}
\end{eqnarray*}
This in turn yields the upper bound 

\begin{eqnarray*}
&& \left( \sum_{m_1, m_2 \geq 0} \left| \left| \sup_{j_1} |BHT(f_1, f_2 * \check{\chi}_{E^{m_1}_{j_1}})| \right| \right|^q_{q_1} \times \left| \left| \left( \sum_{j_1}\sum_{j_2 \not = 2^{m_2}-1} |f_{3}*\check{\chi}_{E^{m_1}_{j_1,r}} * \check{\chi}_{E^{m_1, m_2}_{j_1, j_2,l}}|^2 \right)^{1/2} \right|\right|_{p_1}^q\right. \\ &&~~~~~~~~~~ \left. \times \left|\left| \left(\sum_{j_1}\sum_{j_2 \not = 0, 2^{m_2}-1}|BHT(f_{4}*\check{\chi}_{E^{m_1, m_2}_{j_1,j_2,r}}, f_5)|^2 \right)^{1/2} \right| \right|_{q_2}^q  \right)^{1/q} \\&:=& \left( \sum_{m_1, m_2 \geq 0} A_{m_1} \times B_{m_1, m_2} \times C_{m_1, m_2} \right)^{1/q}.
\end{eqnarray*}
To deal with $A_{m_1}$, we will use estimates for the Bi-Carleson operator. For $B_{m_1, m_2}$, we use the martingale structure to obtain $B_{m_1, m_2}<2^{q(m_1+m_2)(1/2-1/p_3^\prime)}$, and $C_{m_1, m_2}$ can be passed to the $l^2$ vector-valued story in Lemma \ref{L1} combined with the generalized Rubio de Francia estimate in Theorem \ref{RdF}. Following the same argument as before, the resulting geometric sum will be given for any $r > p_4^\prime$ by

\begin{eqnarray*}
\sum_{m_1, m_2 \geq 0} 2^{q(m_1+m_2)(1/2-1/p_3^\prime+1/2-1/r)}
\end{eqnarray*}
which converges provided one chooses r close enough to $p_4^\prime$ once one recalls that $\frac{1}{p_3}+\frac{1}{p_4}<1$ by assumption. 

CASE 1b: 
It only remains to tackle the endpoint case, i.e. the sum is over all $j_1$ for fixed $j_2 = 2^{m_2}-1$. Here, it is important to realize that the intervals $\{E^{m_1, m_2}_{j_1, 2^{m_2}-1, r}\}$ overlap.
The calculation begins with
\begin{eqnarray*}
&&\left( \sum_{m_1, m_2 \geq 0}  \left| \left|\sum_{j_1} BHT(f_1, ,f_2*\check{\chi}_{E^{m_1}_{j_1,l}}) \times  f_{3}*\check{\chi}_{E^{m_1}_{j,r}} * \check{\chi}_{E^{m_1,m_2}_{j_1, 2^{m_2}-1,l}} \times BHT(f_{4}*\check{\chi}_{E^{m_1,m_2}_{j_1,2^{m_2}-1,r}}, f_5) \right| \right|_q^q  \right)^{1/q} 
\end{eqnarray*}
and then uses suprema and Cauchy-Schwarz inequalities inside the $L^q$ norm followed by H\"{o}lder's inequality to yield
 \begin{eqnarray*}
&& \left( \sum_{m_1, m_2 \geq 0}  \left| \left| \left( \sum_{j_1} |BHT(f_1, ,f_2*\check{\chi}_{E^{m_1}_{j_1,l}})|^2 \right)^{1/2} \right. \right. \right. \\ && \left. \left. \left. \left( \sum_{j_1} |f_{3}*\check{\chi}_{E^{m_1}_{j,r}} * \check{\chi}_{E^{m_1,m_2}_{j_1, 2^{m_2}-1,l}}|^2 \right)^{1/2} \sup_{j_1} |BHT(f_{4}*\check{\chi}_{E^{m_1,m_2}_{j_1,2^{m_2}-1,r}}, f_5)| \right| \right|_q^q  \right)^{1/q} \\ &\leq&\left( \sum_{m_1, m_2 \geq 0}  \left| \left| \left( \sum_{j_1} |BHT(f_1, ,f_2*\check{\chi}_{E^{m_1}_{j_1,l}})|^2 \right)^{1/2} \right| \right|_{q_1}^q \times \right. \\ && \left. \left| \left|\left( \sum_{j_1} |f_{3}*\check{\chi}_{E^{m_1}_{j,r}} * \check{\chi}_{E^{m_1,m_2}_{j_1, 2^{m_2}-1,l}}|^2 \right)^{1/2} \right| \right|^q_{p_3} \left| \left| \sup_{j_1} |BHT(f_{4}*\check{\chi}_{E^{m_1,m_2}_{j_1,2^{m_2}-1,r}}, f_5)| \right| \right|_{q_2}^q  \right)^{1/q} \\ &=& \left(\sum_{m_1, m_2} A_{m_1} \times B_{m_1, m_2} \times C_{m_1, m_2} \right)^{1/q}.
\end{eqnarray*}
This time we pass $A_{m_1}$ to the $l^2$ vector-valued story followed by generalized Rubio de Francia, $B_{m_1, m_2} < 2^{qm_1(1/2-1/p_3^\prime)} 2^{-qm_2/p_3^\prime}$, and $C_{m_1, m_2}$ can be handled using the Bi-Carleson estimates. Therefore, the geometric sum one eventually  faces is of the form 

\begin{eqnarray*}
\sum_{m_1, m_2 \geq 0} 2^{qm_1(1/2-1/r)} 2^{qm_1 (1/2-1/p_3^\prime)} 2^{-qm_2 / p_3^\prime},
\end{eqnarray*}
which again converges for r close enough to $p_2^\prime$ because $\frac{1}{p_2}+\frac{1}{p_3} <1$.

CASE 2: $q>1$. One passes the sum over scales outside the $L^q$ norm using the triangle inequality before proceeding exactly as before. 

\end{proof}

\begin{theorem}\label{SMT}
The operator $C_8^{1,1,-1,1,1,-1, 1,1}:L^{p_1} \times L^{p_2} \times W_{p_3} \times L^{p_4} \times L^{p_5} \times W_{p_6} \times L^{p_7} \times L^{p_8}\rightarrow L^{\frac{1}{\sum_{i=1}^8 \frac{1}{p_i}}}$ is bounded provided $2<p_3, p_6 \leq \infty$, as well as $1 < p_1, p_2, p_4, p_5, p_7, p_8 <\infty$, $\frac{1}{p_1}+\frac{1}{p_2}, \frac{1}{p_4}+\frac{1}{p_5},\frac{1}{p_7}+\frac{1}{p_8} <3/2$, and $\frac{1}{p_2}+\frac{1}{p_3}, \frac{1}{p_3}+\frac{1}{p_4}, \frac{1}{p_5}+\frac{1}{p_6}, \frac{1}{p_6}+\frac{1}{p_7} <1$. 
\end{theorem}

\begin{proof}
The idea is two construct 4 martingale structures given by $E^{m_1}_{j_1}, E^{m_1, m_2}_{j_1, j_2},$ $E^{m_3}_{j_3},$ and $E^{m_3, m_4}_{j_3, j_4}$, where the first two are adapted to $f_3$, and the last two are adapted to $f_6$. Again, without loss of generality, $\hat{f}_3 \in L^1(\mathbb{R}) \cap L^{p_3^\prime}(\mathbb{R})$, $\hat{f}_6 \in L^1(\mathbb{R}) \cap L^{p_6^\prime}(\mathbb{R})$, $\hat{f}_3(x) , \hat{f}_6(x) \not =0~a.e. ~x \in \mathbb{R}$. Specifically, we define

\begin{eqnarray*}
\gamma_{f_3}(x)&=&\frac{\int_{-\infty}^x |\hat{f}_3(\bar{x})|^{p_3^\prime} d\bar{x} }{||\hat{f}_3||_{p_3^\prime}^{p_3^\prime}} \\ 
E^{m_1}_{j_1}&=&\gamma_{f_3}^{-1}([j_12^{-m_1}, (j_1+1)2^{-m_1}))\\ 
\gamma_{m_1, j_1, f_3}(x)&=&\frac{\int_{-\infty}^x |\hat{f}_3(\bar{x})|^{p_3^\prime} \chi_{E^{m_1}_{j_1}}(\bar{x}) d\bar{x}}{||\hat{f}_3 \chi_{E^{m_1}_{j_1}}||_{p_3^\prime}^{p_3^\prime}} \\ E^{m_1, m_2}_{j_1, j_2} &=&\gamma_{m_1, j_1, f_3}^{-1}([j_22^{-m_2}, (j_2+1)2^{-m_2}))\\
\gamma_{f_6}(x)&=&\frac{\int_{-\infty}^x |\hat{f}_6(\bar{x})|^{p_3^\prime} d\bar{x} }{||\hat{f}_6||_{p_6^\prime}^{p_6^\prime}} \\ 
\tilde{E}^{m_3}_{j_3}&=&\gamma_{f_6}^{-1}([j_32^{-m_3}, (j_3+1)2^{-m_3}))\\ 
\gamma_{m_3, j_3, f_6}(x)&=&\frac{\int_{-\infty}^x |\hat{f}_6(\bar{x})|^{p_6^\prime} \chi_{E^{m_3}_{j_3}}(\bar{x}) d\bar{x}}{||\hat{f}_6 \chi_{E^{m_3}_{j_3}}||_{p_6^\prime}^{p_6^\prime}} \\ \tilde{E}^{m_3, m_4}_{j_3, j_4} &=&\gamma_{m_3, j_3, f_6}^{-1}([j_22^{-m_2}, (j_2+1)2^{-m_2}))
\end{eqnarray*}
with the appropriate modifications for the rightmost elements of the restricted martingale structures. The hardest case is when $p_2, p_4, p_5, p_7 <2$, which places us in the quasi-Banach setting. We assume this now without loss of generality. Decomposing the operator $C^{1,1,-1,1,1,-1,1,1}$ yields

\begin{eqnarray*}
&&\left| \left| C_{5}^{1,1,-1,1,1} (\vec{f}) \right| \right|_q \\
&\leq& \left( \sum_{m_1, m_2, m_3, m_4 \geq 0} \left| \left| \sum_{j_1,j_2,j_3, j_4} BHT(f_1, ,f_2*\check{\chi}_{E^{m_1}_{j_1,l}}) \times f_{3}*\check{\chi}_{E^{m_1}_{j,r}} * \check{\chi}_{E^{m_1,m_2}_{j_1, j_2,l}} \right. \right. \right. \\ && \left. \left. \left.  ~~~~~~~~~~~~~~~~~~~~~~~~~~~~~~ \times BHT(f_{4}*\check{\chi}_{E^{m_1,m_2}_{j_1,j_2,r}}, f_5* \check{\chi}_{\tilde{E}^{m_3}_{j_3,l}}) \times (f_6*\check{\chi}_{\tilde{E}^{m_3}_{j_3,r}} * \check{\chi}_{\tilde{E}^{m_3, m_4}_{j_3, j_4, l}} ) \right. \right. \right. \\ && \left. \left. \left.  ~~~~~~~~~~~~~~~~~~~~~~~~~~~~~~\times BHT(f_7*\check{\chi}_{\tilde{E}^{m_3, m_4}_{j_3, j_4, r}},f_8) \right| \right|_q^q  \right)^{1/q}.
\end{eqnarray*}
We now separate the sum 

\begin{eqnarray*}
\sum_{j_1, j_2, j_3, j_4}&=&\sum_{j_1, j_3, j_2 \not = 2^{m_2}-1, j_4 \not = 2^{m_4}-1} + \left. \sum_{j_1, j_3, j_2 \not = 2^{m_2}-1} \right|_{j_4=2^{m_4}-1} \\&&+ \left. \sum_{j_1, j_3, j_4 \not = 2^{m_4}-1} \right|_{j_2=2^{m_2}-1} + \left. \sum_{j_1, j_3} \right|_{j_2=2^{m_2}-1, j_4=2^{m_4}-1} \\ &=& A + B +C +D,
\end{eqnarray*}
so that the corresponding estimate is broken into four pieces, $\tilde{A}, \tilde{B}, \tilde{C},$ and $\tilde{D}$.
The plan is now to bound each piece individually. To save space, it is helpful to define $\frac{1}{q_1}=\frac{1}{p_1}+\frac{1}{p_2}, \frac{1}{q_2}=\frac{1}{p_4}+\frac{1}{p_5}, \frac{1}{q_3}=\frac{1}{p_7}+\frac{1}{p_8}$.  First, 
\begin{eqnarray*}
\tilde{A}& :=& \left( \sum_{m_1, m_2, m_3, m_4 \geq 0} \left| \left| \sum_{j_1,j_3, j_2 \not =2^{m_2}-1 , j_4\not = 2^{m_4}-1} BHT(f_1, ,f_2*\check{\chi}_{E^{m_1}_{j_1,l}}) \times f_{3}*\check{\chi}_{E^{m_1}_{j,r}} * \check{\chi}_{E^{m_1,m_2}_{j_1, j_2,l}} \right. \right. \right. \\ && \left. \left. \left.  ~~~~~~~~~~~~~~~~~~~~~~~~~~~~~~~~~~~~~~~~~~~~~~~~ \times BHT(f_{4}*\check{\chi}_{E^{m_1,m_2}_{j_1,j_2,r}}, f_5* \check{\chi}_{\tilde{E}^{m_3}_{j_3,l}}) \right. \right. \right. \\ && \left. \left. \left.  ~~~~~~~~~~~~~~~~~~~~~~~~~~~~~~~~~~~~~~~~~~~~~~~~ \times (f_6*\check{\chi}_{\tilde{E}^{m_3}_{j_3,r}} * \check{\chi}_{\tilde{E}^{m_3, m_4}_{j_3, j_4, l}} ) \times BHT(f_7*\check{\chi}_{\tilde{E}^{m_3, m_4}_{j_3, j_4, r}},f_8) \right| \right|_q^q  \right)^{1/q}\\ &\leq &\left( \sum_{m_1, m_2, m_3, m_4 \geq 0} \left| \left| \sum_{j_3 , j_4\not = 2^{m_4}-1} \sup_{j_1} |BHT(f_1, ,f_2*\check{\chi}_{E^{m_1}_{j_1,l}})| \times\left( \sum_{j_1, j_2 \not = 2^{m_2}-1} | f_{3}*\check{\chi}_{E^{m_1}_{j,r}} * \check{\chi}_{E^{m_1,m_2}_{j_1, j_2,l}} |^2 \right)^{1/2}  \right. \right. \right. \\ && \left. \left. \left.   ~~~~~~~~~~~~~~~~~~~~~~~~~~~~~~~~ \times\left( \sum_{j_1, j_2 \not = 2^{m_2}-1} |BHT(f_{4}*\check{\chi}_{E^{m_1,m_2}_{j_1,j_2,r}}, f_5* \check{\chi}_{\tilde{E}^{m_3}_{j_3,l}})|^2 \right)^{1/2} \right. \right. \right. \\ && \left. \left. \left.~~~~~~~~~~~~~~~~~~~~~~~~~~~~~~~~ \times |f_6*\check{\chi}_{\tilde{E}^{m_3}_{j_3,r}} * \check{\chi}_{\tilde{E}^{m_3, m_4}_{j_3, j_4, l}} | \times |BHT(f_7*\check{\chi}_{\tilde{E}^{m_3, m_4}_{j_3, j_4, r}},f_8)| \right| \right|_q^q  \right)^{1/q}. \\ 
\end{eqnarray*}
Set $\vec{m} \geq 0$ to mean $m_i \geq 0$ for all components $i$. Then the previous calculation can be bounded above by 
\begin{eqnarray*}
&& \left( \sum_{\vec{m} \geq 0} \left| \left| \sup_{j_1} |BHT(f_1, ,f_2*\check{\chi}_{E^{m_1}_{j_1,l}})| \times  \left( \sum_{j_1, j_2 \not = 2^{m_2}-1} | f_{3}*\check{\chi}_{E^{m_1}_{j,r}} * \check{\chi}_{E^{m_1,m_2}_{j_1, j_2,l}} |^2 \right)^{1/2}  \right. \right. \right. \\ && \left. \left. \left.~~~~~~\times \sup_{j_3}\left( \sum_{j_1, j_2 \not = 2^{m_2}-1} |BHT(f_{4}*\check{\chi}_{E^{m_1,m_2}_{j_1,j_2,r}}, f_5* \check{\chi}_{\tilde{E}^{m_3}_{j_3,l}})|^2 \right)^{1/2}  \right. \right. \right. \\ && \left. \left. \left.~~~~~\times \left( \sum_{j_3, j_4 \not = 2^{m_4}-1} |f_6*\check{\chi}_{\tilde{E}^{m_3}_{j_3,r}} * \check{\chi}_{\tilde{E}^{m_3, m_4}_{j_3, j_4, l}} |^2 \right)^{1/2} \times \left( \sum_{j_3, j_4 \not = 2^{m_4}-1}  |BHT(f_7*\check{\chi}_{\tilde{E}^{m_3, m_4}_{j_3, j_4, r}},f_8)|^2 \right)^{1/2} \right| \right|_q^q  \right)^{1/q} .
\end{eqnarray*}
Using H\"{o}lder's inequality as before, we have an upper bound of the form 

\begin{eqnarray*}
&& \left( \sum_{\vec{m} \geq 0} \left| \left| \sup_{j_1} |BHT(f_1, ,f_2*\check{\chi}_{E^{m_1}_{j_1,l}})| \right| \right|_{q_1}^q    \left| \left|  \left( \sum_{j_1, j_2 \not = 2^{m_2}-1} | f_{3}*\check{\chi}_{E^{m_1}_{j,r}} * \check{\chi}_{E^{m_1,m_2}_{j_1, j_2,l}} |^2 \right)^{1/2} \right| \right|^q_{p_3} \right. \\ && \left. ~~~~~ \times \left| \left| \sup_{j_3}\left( \sum_{j_1, j_2 \not = 2^{m_2}-1} |BHT(f_{4}*\check{\chi}_{E^{m_1,m_2}_{j_1,j_2,r}}, f_5* \check{\chi}_{\tilde{E}^{m_3}_{j_3,l}})|^2 \right)^{1/2} \right| \right|_{q_2}^q \right. \\ &&  \left.  ~~~~~\times \left| \left| \left( \sum_{j_3, j_4 \not = 2^{m_4}-1} |f_6*\check{\chi}_{\tilde{E}^{m_3}_{j_3,r}} * \check{\chi}_{\tilde{E}^{m_3, m_4}_{j_3, j_4, l}} |^2 \right)^{1/2} \right| \right| _{p_6}^q  \times \left| \left| \left( \sum_{j_3, j_4 \not = 2^{m_4}-1}  |BHT(f_7*\check{\chi}_{\tilde{E}^{m_3, m_4}_{j_3, j_4, r}},f_8)|^2 \right)^{1/2} \right| \right|_{q_3}^q  \right)^{1/q} \\&:=& \left( \sum_{m_1, m_2, m_3, m_4 \geq 0} A_{m_1} B_{m_1, m_2} C_{m_1, m_2,m_3}, D_{j_3, j_4}, F_{m_3, m_4} \right)^{1/q}.
 \end{eqnarray*}
 The factor $A_{m_1}$ is handled by the Bi-Carleson operator estimates. For the other factors, $B_{m_1, m_2} < 2^{q(m_1+m_2)(1/2-1/p_3^\prime)}||\hat{f}_3||_{p_3^\prime}^q$, $C_{m_1, m_2, m_3}$ is handled using Lemma \ref{L2}, $D_{m_3, m_4}<2^{q(m_3+m_4)(1/2-1/p_6^\prime)}||\hat{f}_6||_{p_6^\prime}^{q}$, and $F_{m_3, m_4}$ is is handled using Lemma \ref{L1}. 
 Since we are assuming $p_2, p_4, p_5, p_7<2$, the geometric sum one eventually faces takes the form 
 
 \begin{eqnarray*}
 \sum_{m_1, m_2, m_3, m_4} 2^{q(m_1 + m_2)(1/2-1/p_3^\prime+1/2-1/p_4^\prime)} 2^{q(m_3+m_4)(1/2-1/p_6^\prime+1/2-1/p_7^\prime)},
 \end{eqnarray*}
which converges because $\frac{1}{p_3}+\frac{1}{p_4} , \frac{1}{p_6}+\frac{1}{p_7} <1$. The next term we face is
\begin{eqnarray*}
&& \tilde{B} := \left( \sum_{\vec{m} \geq 0} \left| \left|\left. \sum_{j_1,j_3, j_2 \not =2^{m_2}-1 ,} \right|_{j_4 = 2^{m_4}-1} BHT(f_1, ,f_2*\check{\chi}_{E^{m_1}_{j_1,l}}) \right. \right. \right. \\ && \left. \left. \left. ~~~~~~~~~~~~~~~~~~~~~~~~~~~~~~~~~~~~~~~~~~~~ \times (f_{3}*\check{\chi}_{E^{m_1}_{j,r}} * \check{\chi}_{E^{m_1,m_2}_{j_1, j_2,l}} )\times BHT(f_{4}*\check{\chi}_{E^{m_1,m_2}_{j_1,j_2,r}}, f_5* \check{\chi}_{\tilde{E}^{m_3}_{j_3,l}})  \right. \right. \right. \\ && \left. \left. \left.  ~~~~~~~~~~~~~~~~~~~~~~~~~~~~~~~~~~~~~~~~~~~~\times (f_6*\check{\chi}_{\tilde{E}^{m_3}_{j_3,r}} * \check{\chi}_{\tilde{E}^{m_3, m_4}_{j_3, j_4, l}} ) \times BHT(f_7*\check{\chi}_{\tilde{E}^{m_3, m_4}_{j_3, j_4, r}},f_8) \right| \right|_q^q  \right)^{1/q}. 
\end{eqnarray*}
It is readily seen that one has an upper bound of the form
\begin{eqnarray*}
\tilde{B} &\leq& \left( \sum_{\vec{m} \geq 0} \left| \left| \sup_{j_1} |BHT(f_1, ,f_2*\check{\chi}_{E^{m_1}_{j_1,l}})| \times   \left( \sum_{j_1, j_2 \not = 2^{m_2}-1} |f_{3}*\check{\chi}_{E^{m_1}_{j,r}} * \check{\chi}_{E^{m_1,m_2}_{j_1, j_2,l}}|^2 \right)^{1/2}\right. \right. \right. \\ && \left. \left. \left.  ~~~~~\times \left( \sum_{j_1, j_3, j_2 \not = 2^{m_2}-1} |BHT(f_{4}*\check{\chi}_{E^{m_1,m_2}_{j_1,j_2,r}}, f_5* \check{\chi}_{\tilde{E}^{m_3}_{j_3,l}})|^2 \right)^{1/2} \right. \right. \right. \\ && \left. \left. \left.  ~~~~~\times \left( \sum_{j_3} |f_6*\check{\chi}_{\tilde{E}^{m_3}_{j_3,r}} * \check{\chi}_{\tilde{E}^{m_3, m_4}_{j_3, 2^{m_4}-1, l}}|^2 \right)^{1/2} \times  \sup_{j_3} |BHT(f_7*\check{\chi}_{\tilde{E}^{m_3, m_4}_{j_3, 2^{m_4}-1, r}},f_8)| \right| \right|_q^q  \right)^{1/q},
\end{eqnarray*}
which is passed to H\"{o}lder's inequality as before, giving the expression

\begin{eqnarray*}
&& \left( \sum_{\vec{m} \geq 0} \left| \left| \sup_{j_1} |BHT(f_1, ,f_2*\check{\chi}_{E^{m_1}_{j_1,l}})| \right| \right|_{q_1}^q \times   \left| \left| \left( \sum_{j_1, j_2 \not = 2^{m_2}-1} |f_{3}*\check{\chi}_{E^{m_1}_{j,r}} * \check{\chi}_{E^{m_1,m_2}_{j_1, j_2,l}}|^2 \right)^{1/2} \right| \right|_{p_3}^q  \right. \\ && \left.~~~~~ \times \left| \left| \left( \sum_{j_1, j_3, j_2 \not = 2^{m_2}-1} |BHT(f_{4}*\check{\chi}_{E^{m_1,m_2}_{j_1,j_2,r}}, f_5* \check{\chi}_{\tilde{E}^{m_3}_{j_3,l}})|^2 \right)^{1/2} \right| \right|_{q_2}^q   \right. \\ && \left.~~~~~ \times\left| \left|  \left( \sum_{j_3} |f_6*\check{\chi}_{\tilde{E}^{m_3}_{j_3,r}} * \check{\chi}_{\tilde{E}^{m_3, m_4}_{j_3, 2^{m_4}-1, l}}|^2 \right)^{1/2} \right| \right|_{p_6}^q \times \left| \left| \sup_{j_3} |BHT(f_7*\check{\chi}_{\tilde{E}^{m_3, m_4}_{j_3, 2^{m_4}-1, r}},f_8)| \right| \right|_{q_3}^q  \right)^{1/q} \\ &:=& \left( \sum_{m_1, m_2, m_3, m_4 \geq 0} A_{m_1} \times B_{m_1, m_2}\times  C_{m_1, m_2, m_3} \times D_{m_3, m_4} \times F_{m_3, m_4} \right)^{1/q}.
\end{eqnarray*}
We pass $A_{m_1}$ and $F_{m_3, m_4}$ to the Bi-Carleson estimates, use the standard decay for $B_{m_1, m_2}$, use Lemma \ref{L1} and generalized Rubio de Francia for $C_{m_1, m_2, m_3}$, and observe $D_{m_3, m_4}<2^{qm_3(1/2-1/p_6^\prime)} 2^{-qm_4/p_6^\prime}$. Doing this, gives the geometric series

\begin{eqnarray*}
\sum_{m_1, m_2, m_3, m_4 \geq 0} 2^{q(m_1+m_2)(1/2-1/p_3^\prime + 1/2-1/p_4^\prime)} 2^{qm_3(1/2-1/p_6^\prime + 1/2-1/p_5^\prime)} 2^{-qm_4/p_6^\prime},
\end{eqnarray*}
which converges because $\frac{1}{p_3}+\frac{1}{p_4}, \frac{1}{p_5}+\frac{1}{p_6} <1$. 

The analysis for $\tilde{C}$ is the same as the analysis for $\tilde{B}$ except that the roles of $j_1, j_2$ and $j_3, j_4$ are reversed. 
Thus, it only remains to bound $\tilde{D}$ to obtain the result. For this, we observe

\begin{eqnarray*}
&& \tilde{D} := \left( \sum_{\vec{m} \geq 0} \left| \left|\left. \sum_{j_1,j_3} \right|_{j_2 = 2^{m_2}-1, j_4 = 2^{m_4}-1} BHT(f_1, ,f_2*\check{\chi}_{E^{m_1}_{j_1,l}})  \right. \right. \right. \\ && \left. \left. \left. ~~~~~~~~~~~~~~~~~~~~~~~~~~~~~~~~~~~~~~~~~~~  \times(f_{3}*\check{\chi}_{E^{m_1}_{j,r}} * \check{\chi}_{E^{m_1,m_2}_{j_1, j_2,l}} )\times BHT(f_{4}*\check{\chi}_{E^{m_1,m_2}_{j_1,j_2,r}}, f_5* \check{\chi}_{\tilde{E}^{m_3}_{j_3,l}}) \right. \right. \right. \\ && \left. \left. \left. ~~~~~~~~~~~~~~~~~~~~~~~~~~~~~~~~~~~~~~~~~~~ \times (f_6*\check{\chi}_{\tilde{E}^{m_3}_{j_3,r}} * \check{\chi}_{\tilde{E}^{m_3, m_4}_{j_3, j_4, l}} ) \times BHT(f_7*\check{\chi}_{\tilde{E}^{m_3, m_4}_{j_3, j_4, r}},f_8) \right| \right|_q^q  \right)^{1/q} \\ &\leq&\left( \sum_{\vec{m} \geq 0} \left| \left| \sum_{j_1} |BHT(f_1, ,f_2*\check{\chi}_{E^{m_1}_{j_1,l}})| \times  |f_{3}*\check{\chi}_{E^{m_1}_{j,r}} * \check{\chi}_{E^{m_1,m_2}_{j_1, 2^{m_2}-1,l}} |\right. \right. \right. \\ && \left. \left.  \left.~~~~~\times \left( \sum_{j_3} |BHT(f_{4}*\check{\chi}_{E^{m_1,m_2}_{j_1,2^{m_2}-1,r}}, f_5* \check{\chi}_{\tilde{E}^{m_3}_{j_3,l}})|^2 \right)^{1/2}   \right. \right. \right. \\ && \left. \left. \left. ~~~~~ \times\left( \sum_{j_3} |f_6*\check{\chi}_{\tilde{E}^{m_3}_{j_3,r}} * \check{\chi}_{\tilde{E}^{m_3, m_4}_{j_3, 2^{m_4}-1, l}} |^2 \right)^{1/2} \times \sup_{j_3} |BHT(f_7*\check{\chi}_{\tilde{E}^{m_3, m_4}_{j_3, 2^{m_4}-1, r}},f_8)| \right| \right|_q^q  \right)^{1/q} \\&\leq& \left( \sum_{\vec{m} \geq 0} \left| \left| \left( \sum_{j_1} |BHT(f_1, ,f_2*\check{\chi}_{E^{m_1}_{j_1,l}})|^2 \right)^{1/2} \times \left(  \sum_{j_1} |f_{3}*\check{\chi}_{E^{m_1}_{j,r}} * \check{\chi}_{E^{m_1,m_2}_{j_1, 2^{m_2}-1,l}} |^2 \right)^{1/2} \right. \right. \right. \\ && \left. \left.  \left. ~~~~~\times \sup_{j_1} \left( \sum_{j_3} |BHT(f_{4}*\check{\chi}_{E^{m_1,m_2}_{j_1,2^{m_2}-1,r}}, f_5* \check{\chi}_{\tilde{E}^{m_3}_{j_3,l}})|^2 \right)^{1/2}  \right. \right. \right. \\ && \left. \left. \left. ~~~~ \times \left( \sum_{j_3} |f_6*\check{\chi}_{\tilde{E}^{m_3}_{j_3,r}} * \check{\chi}_{\tilde{E}^{m_3, m_4}_{j_3, 2^{m_4}-1, l}} |^2 \right)^{1/2} \times \sup_{j_3} |BHT(f_7*\check{\chi}_{\tilde{E}^{m_3, m_4}_{j_3, 2^{m_4}-1, r}},f_8)| \right| \right|_q^q  \right)^{1/q}. 
\end{eqnarray*}

Using H\"{o}lder's inequality once more, we obtain the upper bound

\begin{eqnarray*}
 &&\left( \sum_{\vec{m} \geq 0} \left| \left| \left( \sum_{j_1} |BHT(f_1, ,f_2*\check{\chi}_{E^{m_1}_{j_1,l}})|^2 \right)^{1/2} \right| \right|_{q_1}^q \times  \left| \left| \left(  \sum_{j_1} |f_{3}*\check{\chi}_{E^{m_1}_{j,r}} * \check{\chi}_{E^{m_1,m_2}_{j_1, 2^{m_2}-1,l}} |^2 \right)^{1/2} \right| \right|_{p_3}^q \right. \\ &&\left.~~~~~~~~ \times  \left| \left| \sup_{j_1} \left( \sum_{j_3} |BHT(f_{4}*\check{\chi}_{E^{m_1,m_2}_{j_1,2^{m_2}-1,r}}, f_5* \check{\chi}_{\tilde{E}^{m_3}_{j_3,l}}|^2 \right)^{1/2}  \right| \right|_{q_2}^q  \right. \\ &&  \left.~~~~~~~~\times  \left| \left| \left( \sum_{j_3} |f_6*\check{\chi}_{\tilde{E}^{m_3}_{j_3,r}} * \check{\chi}_{\tilde{E}^{m_3, m_4}_{j_3, 2^{m_4}-1, l}} |^2 \right)^{1/2} \right| \right|_{p_6}^q \right. \\ &&\left.~~~~~~~~ \times \left| \left| \sup_{j_3} |BHT(f_7*\check{\chi}_{\tilde{E}^{m_3, m_4}_{j_3, 2^{m_4}-1, r}},f_8)| \right| \right|_{q_3}^q  \right)^{1/q}  \\ &:=& \left( \sum_{m_1,m_2, m_3, m_4} A_{m_1} B_{m_1, m_2} C_{m_1, m_2, m_3} D_{m_3, m_4} F_{m_3, m_4} \right)^{1/q}.
\end{eqnarray*}
One handles $A_{m_1}$ using the $l^2$ vector-valued for the BHT and generalized Rubio de Francia, $B_{m_1, m_2} < 2^{qm_1(1/2-1/p_3^\prime)} 2^{-qm_2/p_3^\prime}$, $C_{m_1, m_2, m_3}$ using Lemma \ref{L2}, $D_{m_3, m_4} < 2^{q m_3(1/2-1/p_6^\prime)} 2^{-q m_4 /p_6^\prime}$, and $F_{m_3, m_4}$ using Bi-Carleson estimates. 

The geometric series one eventually faces in this case is 

\begin{eqnarray*}
\sum_{m_1, m_2, m_3,m_4 \geq 0} 2^{q m_1 ( 1/2-1/p_2^\prime +1/2 - 1/p_3^\prime)} 2^{-q m_2 /p_3^\prime} 2^{qm_3(1/2-1/p_5^\prime +1/2-1/p_6^\prime)} 2^{-qm_4/p_6^\prime},
\end{eqnarray*}
which converges because $\frac{1}{p_2}+\frac{1}{p_3}, \frac{1}{p_5}+\frac{1}{p_6} <1$.

\end{proof}

It is important to note that each BHT in the previous proof could have been replaced by $C_n^{1,...,1}$ provided we had estimates for the maximal variant 

\begin{eqnarray*}
 \sup C_n^{1,...,1}(\vec{f})(x):= \sup_{M,N} \left| \int_{M<x_1 < ... < x_n <N} \hat{f}_1(x_1) ... \hat{f}_n(x_n) e^{2\pi i x (x_1 + ... + x_n)} d\vec{x} \right|.
\end{eqnarray*}
While such results have not yet appeared in published form, we shall assume them for the purposes of this paper based on personal communication with C. Muscalu.  Also, having proved mixed estimates for $C_8^{1,1,-1,1,1,-1,1,1}$, it is reasonable to think that the same method of proof works for operators that continue the sequence $1,1,-1,1,1,-1,...$ for arbitrarily long lengths. This is indeed the case as we prove in Theorem \ref{MT}, but some care has to be taken with the order in which we use suprema and Cauchy-Schwarz inequalities. At this point, we need to introduce a few definitions.
\begin{definition}
A set of consecutive positive integers $\{ i, ..., i +m\} \subset [n]$ forms a Lebesgue block $\mathfrak{B}$ for the operator $C_n^{\vec{\epsilon}}$ provided $\epsilon_{i+l} + \epsilon_{i+l+1} \not = 0$ for all $0 \leq l < m$. 
\end{definition}

\begin{definition}
We say a sign degeneracy occurs between indices $i$ and $i+1$ of the operator $C_n^{\vec{\epsilon}}$ if $\epsilon_i+ \epsilon_{i+1} =0$
\end{definition}
\begin{definition}
Suppose $C_n^{\vec{\epsilon}}: \otimes_{i=1}^n X^i \rightarrow L^q$ where for each $1 \leq i \leq n$, $X^i \in \{ L^{p_i},W_{p_i}\}.$ Then $W_{C_n^{\vec{\epsilon}}}^*:= \{ 1 \leq i \leq n : X^i = W_{p_i}.\}$
\end{definition}
\begin{theorem}[Main Theorem]\label{MT}
Fix $n \geq 2$, $\vec{\epsilon} \in \{\pm 1\}^n$. Form the operator $C_n^{\vec{\epsilon}}$ with domain $\otimes _{i=1}^n X^i$ and assume for every $i : 1 \leq i \leq n$ either $X^i = L^{p_i}$ for some $1 <p_i<\infty $ or $X^i = W_{p_i}$ for some $p_i >2$. Then $C_n^{\vec{\epsilon}}: \otimes _{i=1}^n X^i \rightarrow L^{\frac{1}{\sum_{i=1}^n \frac{1}{p_i}}}$ is bounded provided the following additional conditions hold:

1) The restricted maximal operator  is bounded on each Lebesgue block $\mathfrak{B}$.

2) If $\epsilon_{i-1}+\epsilon_i =0$ or $\epsilon_{i}+\epsilon_{i+1}=0$, then $\{i-1,i,i+1\} \cap W^*_{C_n^{\vec{\epsilon}}} \not = \emptyset$,

3) $\epsilon_i +\epsilon_{i+1} =0$ implies $\frac{1}{p_i} + \frac{1}{p_{i+1}} <1$.
\end{theorem}

\begin{proof}
 It is enough by the remark at the end of the previous theorem to assume all Lebesgue blocks have length 1 or 2. In fact, we now specialize to the case where each Lebesgue block has length 2 and there is only 1 function in a Wiener space between each Lebesgue block. The same type of proof will work for the cases where a Lebesgue block has length one or there is more than one Wiener function separating a Lebesgue block. So, we will restrict our attention to the operator $C_{3n-1}^{1,1,-1,1,1,-1,...,-1,1,1}$ and prove bounds for it. To this end, we introduce two martingale structures for each $f_{3i} \in W_{p_{3i}}$:

\begin{eqnarray*}
\gamma_{f_{3i}}(x)&=&\frac{ \int_{-\infty}^x |\hat{f}_{3i}(\bar{x})|^{p_{3i}^\prime}d\bar{x}}{|| \hat{f}_{3i}||_{p_{3i}^\prime}^{p_{3i}^\prime}}\\
_{3i}E^{m_1}_{j_1}&=&\gamma_{f_{3i}}^{-1}([j_12^{-m_1}, (j_1+1)2^{-m_1}))\\
\gamma_{m_1, j_1, f_{3i}}(x)&=& \frac{ \int_{-\infty}^x |\hat{f}_{3i}(\bar{x})|^{p_{3i}^\prime} \chi_{_{3i}E^{m_1}_{j_1}}(\bar{x})d\bar{x}}{ ||\hat{f}_{3i} \chi_{ _{3i}E^{m_1}_{j_1}} ||_{p_{3i}^\prime}^{p_{3i}^\prime}}\\
_{3i}E^{m_1, m_2}_{j_1, j_2} &=& \gamma_{m_1, j_1, f_{3i}}^{-1}([j_22^{-m_2}, (j_2 +1)2^{-m_2})).
\end{eqnarray*}
where we may assume as usual that each $\hat{f}_{3i} (x) \not =0~a.e.~x \in \mathbb{R}$ with appropriate modifications for the rightmost elements of the restricted martingale structures. Using the standard partition for each $i: 1 \leq i \leq n-1$

\begin{eqnarray*}
\mathbb{R}^3 \supset \{x_{3i-1} < x_{3i} < x_{3i+1}\}= \bigsqcup_{m_1, m_2} \bigsqcup_{j_1, j_2} ~_{3i}E^{m_1}_{j_1, l} \times (_{3i}E^{m_1}_{j_1,r} \cap~ _{3i}E^{m_1, m_2}_{j_1, j_2, l}) \times~ _{3i}E^{m_1, m_2}_{j_1, j_2, r},
\end{eqnarray*} 
we decompose $C_{3n-1}^{1,1,-1,1,1,-1,...,-1,1,1}$. By moving the sum over $2(n-1)$ scales outside the $L^q$ norm as usual, we are left inside with a sum over $2(n-1)$ indices $j_1, ..., j_{2(n-1)}$.  Of course, we can split the sum over $\vec{j}$ into $2^{n-1}$ pieces by restricting each  even index $j_{2k}$ either to $0 \leq j_{2k}<2^{m_{2k}}-1$ or to the endpoint $2^{m_{2k}}-1$.  We say a given even index $2k:1 \leq k \leq n-1$ is Type A if the corresponding $j_{2k}$ is restricted to $0 \leq j_{2k}<2^{m_{2k}}-1$ and $m_{2k}$ is Type B if $j_{2k}$ is restricted to $2^{m_{2k}}-1$. For example, in Theorem \ref{SMT} we broke apart the original sum into four smaller sums as follows:
\begin{eqnarray*}
\sum_{j_1, j_2, j_3, j_4} &=& \sum_{j_1,  j_2 \not =2^{m_2}-1, j_3, j_4 \not = 2^{m_4}-1} \\&+&\left.  \sum_{j_1, j_2 \not = 2^{m_2}-1, j_3} \right|_{j_4=2^{m_4}-1} \\&+& \left. \sum_{j_1, j_3,  j_4 \not = 2^{m_4}-1} \right|_{j_2=2^{m_2}-1} \\&+& \left. \sum_{j_1, j_3} \right|_{j_2=2^{m_2}-1, j_4=2^{m_4}-1}.
\end{eqnarray*}
For the first sum on the right hand side, the indices $2$ and $4$ are both Type A. For the second sum, the index $2$ is Type A and index $4$ is Type B. For the third sum, index $2$ is Type B and index $1$ is Type A. For the last sum, both $1$ and $2$ are Type B indices. For convenience, say that the first sum is type $AA$, the second type $AB$, the third type $BA$, and the fourth type $BB$.  
It is instructive to recall how sums of type AA in the decomposition of $C_8^{1,1,-1,1,1,-1,1,1}$ were handled. Setting
 \begin{eqnarray*}
A^{m_1}_{j_1}&=&BHT(f_1, f_2*\check{\chi}_{E^{m_1}_{j_1,l}})\\
B^{m_1, m_2}_{j_1, j_2}&=& f_3 *\check{\chi}_{E^{m_1}_{j_1, r}}  *\check{\chi}_{E^{m_1, m_2}_{j_1,j_2, l}}\\
C^{m_1, m_2, m_3}_{j_1, j_2, j_3}&=&BHT(f_4*\check{\chi}_{E^{m_1, m_2}_{j_1, j_2, r}}, f_5*\check{\chi}_{E^{m_3}_{j_3,l}})\\
D^{m_1, m_2}_{j_1, j_2} &=& f_6*\check{\chi}_{E^{m_3}_{j_3, r}} *\check{\chi}_{E^{m_3, m_4}_{j_3, j_4,l}}\\
F^{m_1, m_2}_{j_1, j_2}&=& BHT(f_7*\check{\chi}_{E^{m_3, m_4}_{j_3, j_4, r}}, f_8),
\end{eqnarray*}
we observed 
\begin{eqnarray*}
&&\left| \sum_{j_1, j_2 : j_2 \not = 2^{m_2}-1, j_3, j_4 \not = 2^{m_4}-1} A^{m_1}_{j_1} B^{m_1,m_2}_{j_1, j_2} C^{m_1, m_2, m_3}_{j_1, j_2, j_3} D^{m_3, m_4}_{j_3, j_4} F^{m_3, m_4}_{j_3, j_4} \right| \\&\leq&\sup_{j_1} | A^{m_1}_{j_1}| \left(  \sum_{j_1,  j_2 \not = 2^{m_2}-1} |B_{j_1, j_2}^{m_1, m_2}|^2 \right)^{1/2}  \sup_{j_3} \left( \sum_{j_1,  j_2\not = 2^{m_2}-1} |C^{m_1, m_2, m_3}_{j_1, j_2, j_3}|^2 \right)^{1/2}\times\\&& \left( \sum_{j_3, j_4:j_4 \not = 2^{m_4}-1} |D^{m_3,m_4}_{j_3, j_4}|^2 \right)^{1/2} \left( \sum_{j_3, j_4 :j_4 \not= 2^{m_4}-1} |F^{m_3,m_4}_{j_3,j_4}|^2 \right)^{1/2}.
\end{eqnarray*}

Note that we used a supremum and the Cauchy-Schwarz inequality for the pair $(j_1, j_2)$ \emph{before} proceeding to use a supremum and Cauchy-Schwarz inequality for the pair $(j_3, j_4)$. This order ensures that one takes the $l^2$ norm over $j_1, j_2$ for the cross factor $C_{m_1, m_2, m_3}$ before the $l^\infty$ norm over $j_3$. That the supremum over $j_3$ appears outside the sum over $j_1$ and $j_2$  is necessary for applying Lemma \ref{L2}. We summarize this observation with the heuristic that one resolves sums of type AA from ``left to right."

One quickly checks that for sums of type $AB$, the cross factor is$\left( \sum_{j_1, j_2:j_2 \not = 2^{m_2}-1, j_3} |C_{m_1, m_2, m_3}|^2 \right)^{1/2}$, which we were able to handle using standard $l^2$ vector-valued inequalities, while sums of type $BA$ gave us cross factors like $\sup_{j_1, j_3} |C_{j_1, 2^{m_2}-1, j_3}|$, which we could pass to the Bi-Carleson estimate. Neither sums of type $AB$ nor sums of type $BA$ required one to estimate the factors containing $(j_1, j_2)$ before those containing $(j_3, j_4)$ or the factors containing $(j_3, j_4)$ before those containing $(j_1, j_2)$. Lastly, sums of type BB required us to resolve from ``right to left." The cross factor took the form $\sup_{j_1} \left( \sum_{j_3} |C_{j_1, 2^{{m_2}-1}, j_3}|^2 \right)^{1/2}$. It is easy to check that resolving ``left to right" gives a convergent sum for a block consisting of an arbitrary number of As, and similarly resolving ``right to left" gives a convergent sum for a block consisting of an arbitrary number of Bs.  

Now, for a given sum in the decomposition of $\sum_{j_1, ..., j_{2(n-1)}}$, its type can be represented as a string of As and Bs of length $n-1$. This string can be separated into blocks of As and blocks of Bs of varying lengths. For each block of As, one resolves each j pair from ``left to right." Then, for each block of Bs, one resolves each j pair from ``right to left." Doing this yields a convergent geometric series for each of the $2^{(n-1)}$ pieces of the sum $\sum_{j_1, ...,j_{2(n-1)}}$.
\end{proof}

\appendix

\section{Two Lemmas using Khintchine's Inequality}
The first lemma is well known, see e.g. \citep{2030573}. The second lemma is seemingly new. 

 \begin{lemma}\label{L1}
Fix $\sigma$-finite measure spaces $X$ and $Y$. Let $0 <q \leq p <\infty$ and suppose $T:L^p(X) \rightarrow L^q(Y)$ is a continuous linear operator. Then for any sequence $\{f_j\}$ of functions in $L^p(X)$,
 
\begin{eqnarray*}
\left| \left|  \left( \sum_{j \in \mathbb{N}} | T(f_j)|^2 \right)^{1/2} \right| \right|_{L^q(Y)} \lesssim_{p,q} ||T||_{p \rightarrow q}\left| \left| \left( \sum_{j \in \mathbb{N}} |f_j|^2  \right)^{1/2} \right| \right|_{L^p(X)}.
\end{eqnarray*}
 
 \end{lemma}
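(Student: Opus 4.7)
The plan is to combine Khintchine's inequality, applied separately at the source exponent $p$ and the target exponent $q$, with the linearity of $T$, Fubini's theorem, and Jensen's inequality. Let $\{r_j\}$ denote the Rademacher system on $[0,1]$; Khintchine's inequality states that for any scalars $\{a_j\}$ and any $r \in (0,\infty)$, one has $(\sum_j |a_j|^2)^{1/2} \simeq_r (\int_0^1 |\sum_j r_j(\omega) a_j|^r \, d\omega)^{1/r}$.

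First I would apply Khintchine with exponent $q$ pointwise in $y \in Y$ and then swap integrals via Fubini, using the linearity of $T$, to obtain
$$\left\| \Big(\sum_j |Tf_j|^2\Big)^{1/2} \right\|_{L^q(Y)}^q \simeq_q \int_0^1 \left\| T\Big(\sum_j r_j(\omega) f_j\Big) \right\|_{L^q(Y)}^q d\omega.$$
The operator norm bound then yields the further upper estimate $\|T\|_{p \to q}^q \int_0^1 \|\sum_j r_j(\omega) f_j\|_{L^p(X)}^q \, d\omega$.

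The remaining task is to recover the square function on $X$ from this $\omega$-average. This is where the hypothesis $q \leq p$ is essential: since $q/p \leq 1$, the function $t \mapsto t^{q/p}$ is concave, so Jensen's inequality upgrades the $L^q_\omega$ average to the $(q/p)$-th power of an $L^p_\omega$ average of $\|\sum_j r_j(\omega) f_j\|_{L^p(X)}^p$. Another Fubini brings the $\omega$-integral inside the integration in $x$, and a second application of Khintchine, this time at exponent $p$ pointwise in $x \in X$, matches the inner $\omega$-integral with $(\sum_j |f_j(x)|^2)^{p/2}$ up to a constant depending on $p$. Taking $q$-th roots at the end gives the claimed inequality.

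The only real subtlety is the direction of Jensen's inequality: the assumption $q \leq p$ is exactly what makes $t \mapsto t^{q/p}$ concave and thus lets one absorb the outer $L^q_\omega$ average into a pointwise-in-$x$ Khintchine identity at exponent $p$. In the opposite regime $q > p$ the argument breaks and genuinely different tools (duality, interpolation, or Maurey--Nikishin factorization) would be required. Otherwise the proof is a straightforward chain of Khintchine, Fubini, and the operator norm definition, with all implicit constants depending only on $p$ and $q$ through the Khintchine constants.
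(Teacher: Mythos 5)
Your proposal is correct and follows essentially the same argument as the paper: Khintchine at exponent $q$ pointwise in $y$, Fubini and linearity of $T$, the operator norm bound, then the passage from an $L^q_\omega$ to an $L^p_\omega$ average using $q\leq p$, followed by Fubini and Khintchine at exponent $p$ pointwise in $x$. If anything, you are slightly more careful than the paper at the Jensen/concavity step (the paper records that passage as an equality, whereas it is an inequality valid precisely because $q\leq p$).
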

 \begin{proof}
The standard proof linearizes using Khintchine as follows:

\begin{eqnarray*}
\left| \left|  \left( \sum_{j \in \mathbb{N}} | T(f_j)|^2 \right)^{1/2} \right| \right|_{L^q(Y)}  &\lesssim& \left| \left| \left( \mathbb{E} \left| \sum_{j \in \mathbb{N}} r_j(t)T( f_j) \right|^q \right)^{1/q} ~ \right| \right|_q \\ &=&  \left(  \mathbb{E}  \int_Y \left| T(\sum_{j \in \mathbb{N}} r_j(t) f_j ) \right|^q dx \right)^{1/q} \\&\leq&||T||_{p \rightarrow q} \left( \mathbb{E}  \left( \int_X  \left| \sum_{j \in \mathbb{N}} r_j(t) f_j\right| ^p  dx \right)^{q/p} \right)^{1/q} \\ &=& ||T||_{p \rightarrow q} \left( \int_X  \mathbb{E} \left| \sum_{j \in \mathbb{N}} r_j(t) f_j\right|^p dx \right)^{1/p} \\ &\lesssim& ||T||_{p \rightarrow q}\left( \int_X \left( \sum_{j \in \mathbb{N}} |f_j|^2 \right)^{p/2} dx \right)^{1/p}.
\end{eqnarray*}

\end{proof}

\begin{lemma}\label{L2}
Let $\{f^1_{j_1}\}$ and $\{f^n_{j_n}\}$ be any two sequences of functions in $L^{p_1}(\mathbb{R})$ and $L^{p_n}(\mathbb{R})$ respectively . Moreover, let $\sup C_n^{1,...,1} : \prod_{i=1}^n L^{p_i}(\mathbb{R}) \rightarrow L^{\frac{1}{\sum_{i=1}^n \frac{1}{p_i}}}$ be bounded with $1 < p_i <\infty~\forall 1 \leq i \leq n$.  Then, setting $q=\frac{1}{\sum_{i=1}^n \frac{1}{p_i}}$, 

\begin{eqnarray*}
  \left| \left| \sup_{I \subset \mathbb{R}}  \left(  \sum_{j_1, j_n} \left| C_n^{1,...,1}(f_{j_1}^1*\check{\chi}_{I}, f_2, ..., f^n_{j_n})\right|^2 \right)^{1/2} \right| \right|_q \lesssim ||\sup C^{1,...,1}_n||_{ \vec{p} \rightarrow q} \left| \left| \left( \sum_{j_1} |f^1_{j_1}|^2 \right)^{1/2} \right| \right|_{p_1} ... \left| \left| \left( \sum_{j_n}| f^n_{j_n}|^2 \right)^{1/2} \right| \right|_{p_n}.
\end{eqnarray*}
\end{lemma}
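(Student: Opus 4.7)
The plan is to linearize both $\ell^2$ sums via Khintchine's inequality and then apply the assumed boundedness of $\sup C_n^{1,\ldots,1}$ to the randomized functions. Let $\{r_{j_1}(s)\}$ and $\{r_{j_n}(t)\}$ be independent Rademacher sequences and set $g^1_s := \sum_{j_1} r_{j_1}(s) f^1_{j_1}$ and $g^n_t := \sum_{j_n} r_{j_n}(t) f^n_{j_n}$. Iterated Khintchine with exponent $q$ gives, for every fixed $x$ and interval $I$,
\[
\Bigl( \sum_{j_1, j_n} \bigl| C_n^{1,\ldots,1}(f^1_{j_1} * \check{\chi}_I, f_2, \ldots, f^n_{j_n})(x) \bigr|^2 \Bigr)^{q/2} \simeq_q \mathbb{E}_{s,t} \bigl| C_n^{1,\ldots,1}(g^1_s * \check{\chi}_I, f_2, \ldots, g^n_t)(x) \bigr|^q,
\]
where the multilinearity of $C_n^{1,\ldots,1}$ has been used to absorb the Rademacher sums inside.

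Next I would take the supremum over $I$, integrate in $x$, and invoke the monotonicity $\sup_I \mathbb{E}_{s,t}[\,\cdot\,] \leq \mathbb{E}_{s,t}[\sup_I \cdot\,]$ together with Fubini to obtain
\[
\Bigl\| \sup_I \bigl( \sum_{j_1, j_n} |C_n^{1,\ldots,1}(f^1_{j_1}*\check{\chi}_I,f_2,\ldots,f^n_{j_n})|^2 \bigr)^{1/2} \Bigr\|_q^q \lesssim \mathbb{E}_{s,t} \bigl\| \sup_I |C_n^{1,\ldots,1}(g^1_s*\check{\chi}_I,f_2,\ldots,g^n_t)| \bigr\|_q^q.
\]
The factor inside the expectation is bounded above, using the splitting $\int_{a<x_1<b}=\int_{x_1<b}-\int_{x_1<a}$ to reduce to one-sided truncations, by $\|\sup C_n^{1,\ldots,1}(g^1_s,f_2,\ldots,g^n_t)\|_q^q$, so that the assumed boundedness hypothesis yields
\[
\lesssim \|\sup C_n^{1,\ldots,1}\|_{\vec{p}\to q}^q \cdot \prod_{i=2}^{n-1}\|f_i\|_{p_i}^q \cdot \mathbb{E}_s\|g^1_s\|_{p_1}^q \cdot \mathbb{E}_t\|g^n_t\|_{p_n}^q.
\]

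Finally I would undo the randomization on the first and last slots. Since $q = 1/\sum_i 1/p_i \leq p_1, p_n$ automatically, Jensen's inequality gives $\mathbb{E}_s\|g^1_s\|_{p_1}^q \leq \bigl(\mathbb{E}_s\|g^1_s\|_{p_1}^{p_1}\bigr)^{q/p_1}$; swapping $\mathbb{E}_s$ with the $x$-integration and applying Khintchine pointwise yields $\mathbb{E}_s\bigl|\sum_{j_1} r_{j_1}(s)f^1_{j_1}(x)\bigr|^{p_1}\simeq\bigl(\sum_{j_1}|f^1_{j_1}(x)|^2\bigr)^{p_1/2}$, so that $\mathbb{E}_s\|g^1_s\|_{p_1}^q \lesssim \bigl\|\bigl(\sum_{j_1}|f^1_{j_1}|^2\bigr)^{1/2}\bigr\|_{p_1}^q$, and likewise for the $j_n$ slot. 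Taking $q$-th roots yields the claimed estimate. The main obstacle is the interchange $\sup_I \mathbb{E} \leq \mathbb{E} \sup_I$ combined with the identification of $\sup_I |C_n^{1,\ldots,1}(g*\check{\chi}_I,\ldots)|$ with the maximal operator $\sup C_n^{1,\ldots,1}$ assumed bounded; once that identification is in place, everything else reduces to routine applications of Khintchine and Jensen.
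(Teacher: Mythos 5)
Your proposal is correct and follows essentially the same route as the paper's proof: Khintchine linearization of the $\ell^2$ sums using multilinearity, the interchange $\sup_I \mathbb{E} \leq \mathbb{E}\sup_I$ plus Fubini, an application of the assumed bound for $\sup C_n^{1,\ldots,1}$ pointwise in the random parameters, and then Jensen together with a second application of Khintchine to recover the square functions. If anything, you are slightly more careful than the paper, since you use genuinely independent Rademacher sequences in the two slots and spell out the reduction of the interval truncation $f^1_{j_1}*\check{\chi}_I$ to the one-sided truncations controlled by the maximal operator, a step the paper leaves implicit.
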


\begin{proof}
The proof again linearizes using Khintchine:
\begin{eqnarray*}
&& \left| \left| \sup_{I \subset \mathbb{R}}  \left(  \sum_{j_1, j_n} \left| C_n^{1,...,1}(f_{j_1}^1*\check{\chi}_{I}, f_2, ..., f^n_{j_n})\right|^2 \right)^{1/2} \right| \right|_q  \\ &\lesssim&  \left| \left| \sup_{I \subset \mathbb{R}}  \left( \mathbb{E} \left| \sum_{j_1, j_n} C_n^{1,...,1}(r^1_{j_1}(t)f_{j_1}^1*\check{\chi}_{I}, f_2, ..., f^n_{j_n}r^2_{j_n}(t))\right| ^q\right)^{1/q} \right| \right|_q \\ &=&  \left( \int  \sup_{I \subset \mathbb{R}} \mathbb{E} \left| C_n^{1,...,1}(\sum_{j_1} r^1_{j_1}(t)f_{j_1}^1*\check{\chi}_{I}, f_2, ..., \sum_{j_n} f^n_{j_n}r^2_{j_n}(t))\right| ^q dx \right)^{1/q}\\&\leq&  \left(   \mathbb{E} \int \sup_{I \subset \mathbb{R}} \left| C_n^{1,...,1}(\sum_{j_1} r^1_{j_1}(t)f_{j_1}^1*\check{\chi}_{I}, f_2, ..., \sum_{j_n} f^n_{j_n}r^2_{j_n}(t))\right| ^q dx \right)^{1/q} \\ &\lesssim& ||\sup C^{1,...,1}_n||_{\vec{p} \rightarrow q} \left( \mathbb{E}_1 ||\sum_{j_1} r^1_{j_1}(t) f^1_{j_1} ||^{q}_{p_1} \mathbb{E}_2 ||\sum_{j_n} r^2_{j_n}(t) f^n_{j_n}||_{p_n}^q \right)^{1/q} \\ &\lesssim& ||\sup C^{1,...,1}_n||_{\vec{p} \rightarrow q} \left( (\mathbb{E}_1 ||\sum_{j_1} r^1_{j_1}(t) f^1_{j_1} ||^{p_1}_{p_1})^{q/p_1}( \mathbb{E}_2 ||\sum_{j_n} r^2_{j_n}(t) f^n_{j_n}||_{p_n}^{p_n} )^{q/p_n} \right)^{1/q}.
\end{eqnarray*}
Using Fubini and Khintchine again, we arrive at the upper bound 

\begin{eqnarray*}
 ||\sup C^{1,...,1}_n||_{ \vec{p} \rightarrow q} \left| \left| \left( \sum_{j_1} |f^1_{j_1}|^2 \right)^{1/2} \right| \right|_{p_1} ... \left| \left| \left( \sum_{j_n}| f^n_{j_n}|^2 \right)^{1/2} \right| \right|_{p_n}.
  \end{eqnarray*}
\end{proof}



\bibliographystyle{plainnat}
\bibliography{bibPAP.bib}







\end{document}